\DeclareMathOperator\C{\mathbb C}
\DeclareMathOperator\Z{\mathbb Z}
\newtheorem{theorem}{Theorem}[section]
\newtheorem{lemma}[theorem]{Lemma}
\newtheorem{cor}[theorem]{Corollary}
\newtheorem{prop}[theorem]{Proposition}
\theoremstyle{definition}
\newtheorem{definition}[theorem]{Definition}
\theoremstyle{remark}
\newtheorem{remark}[theorem]{Remark}
\newcommand{\dontprint}[1]\relax
\newcommand{\we}{\wedge}
\renewcommand{\P}{{\mathbb P}}
\newcommand{\wt}{\widetilde}
\newcommand{\ot}{\otimes}
\renewcommand{\SS}{{\mathcal S}}
\newcommand{\LL}{{\mathcal L}}
\newcommand{\PP}{{\mathcal P}}
\newcommand{\si}{\sigma}
\newcommand{\sub}{\subset}
\newcommand{\ov}{\overline}
\newcommand{\diag}{\operatorname{diag}}
\newcommand{\la}{\lambda}
\newcommand{\GL}{\operatorname{GL}}
\newcommand{\G}{{\mathbb G}}
\newcommand{\lan}{\langle}
\newcommand{\ran}{\rangle}
\newcommand{\codim}{{\operatorname{codim}}}
\renewcommand{\k}{\mathbf{k}}
\newcommand{\val}{{\operatorname{val}}}
\newcommand{\srk}{{\operatorname{srk}}}
\newcommand{\sspan}{{\operatorname{span}}}
\title{Linear subspaces of minimal codimension in hypersurfaces}
\author{David Kazhdan}
\author{Alexander Polishchuk}
\thanks{A.P. is partially supported by the NSF grant DMS-2001224, 
and within the framework of the HSE University Basic Research Program and by the Russian Academic Excellence Project `5-100'.}
\address{Einstein Institute of Mathematics,
The Hebrew University of Jerusalem,
Jerusalem 91904, Israel}
\email{kazhdan@math.huji.ac.il}
\address{
    Department of Mathematics, 
    University of Oregon, 
    Eugene, OR 97403, USA; National Research University Higher School of Economics; and Korea Institute for 
    Advanced Study 
  }
  \email{apolish@uoregon.edu}
\begin{document}
\begin{abstract}Let $\k$ be a perfect field and let $X\subset \P^N$ be a 
 hypersurface of degree $d$ defined over $\k$ and containing a linear subspace $L$
defined over $\ov{\k}$ with $\codim_{\P^N}L=r$. We show that  $X$ contains a linear subspace $L_0$ defined over $\k$ with $\codim_{\P^N}L\le dr$.
%Furthermore, we propose an explicit algorithm for finding $L_0$ (with a worse estimate on codimension) from $L$ and its Galois conjugates.
We conjecture that the intersection of all linear subspaces (over $\ov{\k}$) of minimal codimension $r$ contained in $X$, has codimension bounded above only in
terms of $r$ and $d$. We prove this when either $d\le 3$ or $r\le 2$.
\end{abstract}

\maketitle

\section{Introduction}

Let $f(x_1,\ldots,x_n)$ be a homogeneous polynomial of degree $d\ge 2$ over a field $\k$.
Recall that the {\it slice rank} $\srk_\k(f)$ of $f$ is the minimal number $r$ such that there exists a decomposition
$$f=l_1f_1+\ldots+l_rf_r,$$
where $l_i$ are linear forms defined over $\k$. 
%We write %$\srk(f):= \srk_{ \ov{\k}}(f) $ where $\ov{\k}$ is the algebraic closure of $\k$.

The slice rank $\srk_{\k}(f)$ has a simple geometric meaning: it is the minimal codimension in $\P^{n-1}$ 
of a linear $\k$-subspace $P\sub \P^{n-1}$ contained in
the projective hypersurface $f=0$. Note that if $\srk_{\ov{\k}}(f)<n/2$ then this hypersurface is necessarily singular.

It is clear that $\srk_\k(f)\ge \srk_{\ov{\k}}(f)$ and it is easy to find examples when  $\srk_\k(f)> \srk_{\ov{\k}}(f)$. 

One can ask for an upper estimate for $\srk_\k(f)$ in terms of $\srk_{\ov{\k}}(f)$. 
The first result of this paper is presicely such an estimate, which we obtain
by adapting to homogeneous polynomials the theory of $G$-rank of tensors from the work of Derksen \cite{Derksen}.

A bit more generally, for a collection $f_1,\ldots,f_s$ of homogeneous polynomials of the same degree $d$,
we set
$$\srk_\k(f_1,\ldots,f_s)=\inf_{(c_1,\ldots,c_s)\neq (0,\ldots,0)}\srk_\k(c_1f_1+\ldots+c_sf_s).$$

\medskip

\noindent
{\bf Theorem A}. {\it Assume that the field $\k$ is perfect. Then for a homogeneous polynomial $f$ over $\k$ of degree $d\ge 2$, one has
$$\srk_\k(f)\le d\cdot \srk_{\ov{\k}}(f),$$
$$\srk_\k(f_1,\ldots,f_s)\le d\cdot s\cdot \srk_{\ov{\k}}(f_1,\ldots,f_s).$$
}

\medskip

The inequality for a single $f$ is sharp for every degree $d$: if $E/\k$ is a Galois extension of degree $d$ then the norm $E\to \k$
is a polynomial of degree $d$ that has slice rank $d$ over $\k$ and slice rank $1$ over $E$.
%the polynomial $f=x_1^2+x_2^2$ has slice rank $2$ over $\R$ 
%and slice rank $1$ over $\C$. 
%Note for this $f$ we also have $w_{\R}(f)=2$, so we have
%$$s_{\R}(f)=r_{\R}^G(f)=w_{\R}(f)=r_{\C}^G(f)=w_{\C}(f)=2.$$
%One can also check that the inequality is sharp for $s=2$ by considering pairs of quadratic forms over $\R$.

Note that Derksen's theory of $G$-rank for not necessarily symmetric tensors, applied to symmetric tensors implies
only the inequality $\srk_\k(f)\le \frac{d^2}{2}\cdot\srk_{\ov{\k}}(f)$.

%Note that $s_\k(f)$ can be interpreted as the minimal codimension (in the projective space)
%of a linear subspace, defined over $\k$
%and contained in the projective hypersurface given by $f$.

Our second goal in this paper is to understand the inequality of Theorem A more constructively. Geometrically, the statement
is that if a hypersurface $X\sub \P^N$ of degree $d$, defined over $\k$, contains a linear subspace $L$ of codimension $r$ in $\P^N$, defined
over $\ov{\k}$, then $X$ contains a linear subspace $L_0$ of codimension $\le dr$ in $\P^N$, defined over $\k$.
One can ask for an explicit construction of  $L_0$ from $L$ and its Galois conjugates. 
The simplest answer would be that one can just take $L_0$ to be the intersection of all the Galois conjugates of $L$. 
We conjecture that the following stronger geometric statement holds.
 
\medskip

\noindent
{\bf Conjecture B}. {\it Let $X=(f=0)\sub \P^N$ be a hypersurface of degree $d$ (over any ground field), and let
 $r$ be the minimal natural number such that $X$ contains
a linear subspace $L$ with $\codim_{\P^N} L=r$. Set 
$$L_f:=\cap_{L\sub X, \codim_{\P^n}L=r} L.$$
Then there exists a function $c(r,d)$ such that $\codim L_f\le c(r,d)$.
}

\medskip

It is an easy exercise to check that the conjecture holds if $d=2$ or $r=1$ with $c(r,2)=2r$ and $c(1,d)=d$.
We prove the following particular cases of the weak form of the conjecture: when $d=3$ (and $r$ is arbitrary) and when $r=2$ (and $d$ is arbitrary).
%succeed partially in this: we can construct a subspace  $L_0$ explicitely 
%but with a worse estimate on the codimension.

%In the case of cubics, the recipe is simple and comes from the following result.

\medskip

\noindent
{\bf Theorem C}. {\it 
(i) Conjecture B holds for cubic hypersurfaces with 
$$c(r,3)=c(r):=\frac{1}{2}\bigl(\frac{(r+1)^2}{4}+r+3\bigr)\cdot\bigl(\frac{(r+1)^2}{4}+r\bigr).$$

\noindent
(ii) Conjecture B holds for $r=2$ with 
$$c(2,d)=d^2+1.$$
More precisely, for a polynomial $f$ of slice rank $2$ and degree $d$, either $\codim L_f\le d^2-1$ or $f$ is a pullback from a space of dimension
$\le d^2+1$.
}

\medskip

One can ask how far are the estimates of Theorem C from being optimal.
In the case $d=3$ and $r=2$ we show that Conjecture B holds with $c(2,3)=6$ by giving a partial classification of cubic hypersurfaces of rank $2$ (see Theorem \ref{rk2-intersection-thm}).
Consider the cubic $f(x_i,y_{jk})$, where $i=1,\ldots,n$, $1\le j<k\le n$ (so the number of variables is $n(n+1)/2$), given by
$$f=\sum_{i<j}x_ix_jy_{ij}.$$
It is reasonable to expect that the rank of $f$ is equal to $n-1$. Since for every $i<j$, we have $f\in (y_{ij}; x_k \ | \ k\neq i, k\neq j)$, this would imply that $\codim L_f=n(n+1)/2$,
which depends quadratically on the rank. So it seems that the optimal bound $c(r,3)$ is at least quadratic in $r$. 

On the other hand, let us consider a polynomial $f$ in $n$ groups of variables 
$$(x_1(1),\ldots,x_m(1)), \ \ldots, (x_1(n),\ldots,x_m(n))$$
given by
$$f=\sum_{i=1}^m x_1(1)\ldots \widehat{x_i(1)}\ldots x_m(1)\cdot x_i(2)\ldots x_i(n).$$
Then $\deg(f)=m+n-2$ and it is easy to see that $f$ has rank $2$ and $\codim L_f=mn$. This shows that the optimal bound $c(2,d)$ grows quadratically in $d$.

Theorem C(i) implies that if $L$ is a linear subspace of minimal codimension $r$ in $\P^N$, defined over a Galois extension of $\k$, contained in a cubic hypersurface $X$ (defined over $\k$), then
by taking intersection of all Galois conjugates of $L$ we get a linear subspace $L_0$ of codimension $\le c(r)$, contained in $X$ and defined over $\k$. 
%The natural question is whether the estimate $c(r)$ in Theorem B can be improved: the estimate $3r$ is the best one can hope for.

Since we don't know the validity of Conjecture B for general $r$ and $d$, 
we give a more complicated construction of constructing a linear subspace $L_0\sub X$, defined over $\k$, starting from the Galois conjugates of
$L\sub X$ defined over a Galois extension of $\k$.
%in the case of hypersurfaces of higher degree 
For this we introduce the following recursive definition,
where for linear subspaces $L_1,\ldots,L_s\sub \P^N$ we denote by $\lan L_1,\ldots,L_s\ran\sub \P^N$ their linear span.

\begin{definition}\label{recursion-subspaces-def}
For a collection $\LL=\{L_1,\ldots,L_s\}$ of linear subspaces of $\P^N$, we define a new collection of linear subspaces of $\P^N$
as follows. Let $L=\lan L_1,\ldots,L_s\ran$. For each minimal subset $J\sub [1,s]$ such that $\lan L_j \ |\ j\in J\ran=L$, we set $L_J:=\cap_{j\in J}L_j$, and we denote
by $\LL^{(1)}$ the collection of all such subspaces $L_J$. We denote by $\LL^{(i)}$, $i\ge 1$, the collections of linear subspaces obtained by
iterating this construction. 
\end{definition}

\medskip

\noindent
{\bf Theorem D}. {\it 
Let $X\sub \P^N$ be a hypersurface of degree $d\ge 2$ and let $\LL=(L_1,\ldots,L_s)$ be a collection of linear subspaces
contained in $X$, such that $\codim_{\P^N}L_i\le r$, where $r\ge 2$. Then for the linear subspace
$$L_0:=\lan L \ |\ L\in \LL^{(d-1)}\ran,$$
we have $L_0\sub X$ and 
$$\codim_{\P^N} L_0 \le r^{2^{d-1}}.$$
}

\medskip

Applying the construction of Theorem D to the collection of all Galois conjugates of a linear subspace of codimension $r\ge 2$ in $\P^N$,
defined over some Galois extension of $\k$, contained in a hypersurface $X$ (defined over $\k$), we get an algorithm for producing a linear subspace of codimension 
$\le r^{2^{d-1}}$ in $\P^N$, contained in $X$ and defined over $\k$.

One can ask whether the second inequality in Theorem A can also be explained constructively. In other words,
starting with an $s$-dimensional subspace $F$ of homogeneous polynomials of the same degree $d$, defined over a perfect field $\k$, such that there exists a nonzero
$f\in F_{\ov{\k}}$ and subspace of linear forms of dimension $r$ over $\ov{\k}$ such that $f\in (L)$, we want to produce an element $f_0\in F\setminus 0$ and a subspace
of linear forms $L_0$, both defined over $\k$, such that $f_0\in (L_0)$ and dimension of $L_0$ is $\le c(sr)$.
In Remark \ref{family-rk-rem} we show how to do this using the algorithm of Theorem D for a single polynomial.

%We left out the case $r=1$ in Theorem D since this case is much simpler: if the hypersurface $X$ of degree $d$
%contains a hyperplane $H$ then since there are $\le d$ hyperplanes contained in $X$,
%the intersection of all Galois conjugates of $H$ has codimension $\le d$ in $\P^N$.

%Our interest to Conjecture B for the slice rank is motivated by a similar conjecture about the {\it Schmidt rank} (also known as strength), see
%\cite{AKZ}, \cite{BBOV} and references therein. Theorem 

Our study is partially motivated by the desire to understand 
the related notion of the {\it Schmidt rank} (also known as {\it strength}) of a homogeneous polynomial (see \cite{AKZ}, \cite{BBOV} and references therein), 
defined as the minimal number $r$ such that $f$ admits a decomposition $f=g_1h_1+\ldots+g_rh_r$, with $\deg(g_i)$ and $\deg(h_i)$ smaller
than $\deg(f)$. Similarly to Theorem A one can try to estimate the Schmidt rank of a polynomial
over a non-closed field in terms of its Schmidt rank over an algebraic closure. In \cite{KP-quartics}, we show how to do this for quartic polynomials. 

\bigskip

\noindent
{\it Acknowledment}. The second author is grateful to Nick Addington for introducing him to Macaulay 2, 
which helped to find some examples of polynomials of small rank with large $\codim L_f$. 
%Throughout the paper we assume that the base field $\k$ is perfect.

\section{$G$-rank for homogeneous polynomials}

Throughout this section we assume that the ground field $\k$ is perfect.

%relating the slice rank of tensors with GIT stability (of some associated tensors).

%We also have the following interesting corollary from Theorem A.

%\medskip

%\noindent
%{\bf Corollary}. {\it Assume $\k$ is perfect field. If $m_1,\ldots,m_r$ are divisors of $d$, and 
%$f_1,\ldots,f_r$ are homogeneous forms of degrees $\deg(f_i)=d/m_i$. Then
%$$s_\k(f_1^{m_1}+\ldots+f_r^{m_r})\le \frac{d}{m_1}s_\k(f_1)+\ldots+\frac{d}{m_r}s_\k(f_r).$$
%}

%\medskip

%Corollary for border rank???

%\subsection{Basic properties}

\subsection{Definition of the $G$-rank and the relation to the slice rank}

Below we introduce an analog of $G$-rank for symmetric tensors, or equivalently, for homogeneous polynomials,
$r_\k^G(f)$ (where $G=\GL_n$). We show that it enjoys similar properties to the Derksen's $G$-rank of a non-symmetric tensor
studied in \cite{Derksen}, in particular, it does not change under algebraic extensions of perfect fields. 
We also introduce the notion $r^G_\k(f_1,\ldots,f_s)$ of a $G$-rank for a collection of polynomials of the same degree.

%Note that in the symmetric case there is an additional property which does not have analogs for non-symmetric tensors:
%$$r_\k^G(f^m)=r_\k^G(f)$$
%for any $m\ge 1$.

%We prove the following relations.

Let $V$ be an $n$-dimensional space over $\k$. We consider the group $G=\GL(V)\simeq \GL_n(\k)$ 
acting naturally on the space $S^dV$, and the induced action on ${\bigwedge}^s(S^dV)$. 
%of degree $d$ homogeneous polynomials on $V^*$.

We consider points of $G$ and of ${\bigwedge}^s S^dV$ with values in the ring of formal power series $\k[\![t]\!]$.
For a $\k$-vector space $W$ and a vector $w\in W[\![t]\!]$, we denote by $\val_t(w)$ the minimal $m\ge 0$ such that
$w\in t^m W[\![t]\!]$.

For $f\in S^dV$ and $g(t)\in G(\k[\![t]\!])$ such that $\val_t(g(t)\cdot f)>0$, we set
$$\mu(g(t),f)=d\cdot \frac{\val_t(\det(g(t)))}{\val_t(g(t)\cdot f)}.$$
The factor $d$ in front is a matter of convention: it makes the factor $d$ disappear in some of the statements below. 

\begin{definition} (i) For nonzero $f\in S^dV$ we define its $G$-rank by 
$$r^G_\k(f)=\inf_{g(t)}\mu(g(t),f),$$
where we take the infimum over all $g(t)\in G(\k[\![t]\!])$ such that $\val_t(g(t)\cdot f)>0$.

\noindent
(ii) More generally, for linearly independent $f_1,\ldots,f_s\in S^dV$, we define the $G$-rank by
$$r^G_\k(f_1,\ldots,f_s)=\inf_{g(t)}\mu(g(t),f_1,\ldots,f_s),$$
where
$$\mu(g(t),f_1,\ldots,f_s)=ds\cdot \frac{\val_t(\det(g(t)))}{\val_t(g(t)\cdot f_1\we\ldots\we f_s)},$$
and the infinum is taken over $g(t)\in G(\k[\![t]\!])$ such that $\val_t(g(t)\cdot f_1\we\ldots\we f_s)>0$.
\end{definition}

The formula 
$$\val_t(g(t)\cdot f^m)=m\cdot \val_t(g(t)\cdot f)$$
immediately implies the following property.

\begin{lemma}\label{power-lem}
For any $f\in S^d V$ and any $m\ge 1$ one has
$$r^G_{\k}(f^m)=r^G_{\k}(f).$$
\end{lemma}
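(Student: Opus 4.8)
The plan is to first verify the displayed valuation identity $\val_t(g(t)\cdot f^m)=m\cdot\val_t(g(t)\cdot f)$ and then unwind the definition of the $G$-rank. The key observation is that $g(t)\in G(\k[\![t]\!])$ acts on the graded pieces $S^eV[\![t]\!]$ compatibly with the multiplication maps of the symmetric algebra, so that $g(t)\cdot f^m=(g(t)\cdot f)^m$ inside $S^{dm}V[\![t]\!]$. Writing $h:=g(t)\cdot f$, which is nonzero (as $\val_t(h)$ is assumed finite) with $v:=\val_t(h)$, we have $h=t^v h_0$ for some $h_0\in S^dV[\![t]\!]$ with nonzero constant term $h_0(0)\in S^dV$. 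Then $h^m=t^{mv}h_0^m$ and the constant term of $h_0^m$ is $h_0(0)^m$. Since the symmetric algebra $\mathrm{Sym}(V)=\k[x_1,\ldots,x_n]$ is an integral domain, $h_0(0)^m\neq 0$, hence $\val_t(h^m)=mv$; this is the asserted formula.

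Next I would draw the two consequences. First, the condition $\val_t(g(t)\cdot f^m)>0$ holds if and only if $\val_t(g(t)\cdot f)>0$, because $m\ge 1$; thus the infima defining $r^G_\k(f)$ and $r^G_\k(f^m)$ range over the same set of $g(t)\in G(\k[\![t]\!])$. Second, for any such $g(t)$, using $\deg(f^m)=dm$,
$$\mu(g(t),f^m)=dm\cdot\frac{\val_t(\det(g(t)))}{\val_t(g(t)\cdot f^m)}=dm\cdot\frac{\val_t(\det(g(t)))}{m\,\val_t(g(t)\cdot f)}=d\cdot\frac{\val_t(\det(g(t)))}{\val_t(g(t)\cdot f)}=\mu(g(t),f).$$
Taking the infimum over $g(t)$ gives $r^G_\k(f^m)=r^G_\k(f)$.

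There is essentially no obstacle here; the only point requiring a moment's care is the passage from $\val_t(h)$ to $\val_t(h^m)$, which rests on the fact that a nonzero element of $S^dV$ has nonzero $m$-th power in $S^{dm}V$ — exactly the integral-domain property of $\mathrm{Sym}(V)$. (Alternatively, one could avoid invoking this by noting that $g(t)$ is invertible over $\k(\!(t)\!)$, so $g(t)\cdot f\neq 0$ in $S^dV(\!(t)\!)$, and carrying out the argument in the fraction field.)
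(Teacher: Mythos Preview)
Your proof is correct and follows exactly the paper's approach: the paper simply states the identity $\val_t(g(t)\cdot f^m)=m\cdot\val_t(g(t)\cdot f)$ and declares the lemma immediate from it, while you supply the (routine) verification of that identity and unwind the definition of $\mu$. Nothing to add.
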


Here is the main result connecting the $G$-rank with the slice rank and also with the Waring rank.

\begin{theorem}\label{Grk-thm} 
Assume the base field $\k$ is perfect.

\noindent
(i) For a homogeneous polynomial $f$ of degree $d$ over $\k$ one has
$$\srk_\k(f)\le r_\k^G(f)\le d\cdot \srk_\k(f).$$
For a collection $f_1,\ldots f_s$ of homogeneous polynomials of degree $d$ over $\k$ one has
$$\srk_\k(f_1,\ldots,f_s)\le r_\k^G(f_1,\ldots,f_s)\le ds\cdot \srk_\k(f_1,\ldots,f_s).$$

\noindent
(ii) Suppose $m_1,\ldots,m_r$ are divisors of $d$, and $f_1,\ldots,f_r$ are homogeneous polynomials of degrees $\deg(f_i)=d/m_i$. Then
$$r_\k^G(f_1^{m_1}+\ldots+f_r^{m_r})\le r_\k^G(f_1)+\ldots+r_\k^G(f_r).$$
In particular, 
$$r_\k^G(f)\le w_\k(f),$$
where $w_\k(f)$ is the Waring rank of $f$, i.e., the minimal number $r$ such that 
$$f=l_1^d+\ldots+l_r^d,$$
where $l_i$ are linear forms defined over $\k$.
\end{theorem}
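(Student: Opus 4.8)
The plan is to prove the three assertions in order, using the valuation-theoretic definition of $r^G_\k$ and the behavior of $\val_t$ under the $\GL(V)$-action. For the lower bound $\srk_\k(f)\le r^G_\k(f)$ in (i), I would start with a one-parameter subgroup $g(t)\in G(\k[\![t]\!])$ with $\val_t(g(t)\cdot f)=e>0$ and, after multiplying by a scalar power of $t$ (which changes $\val_t(\det)$ in a controlled way and does not affect $g(t)\cdot f$ up to a unit), reduce to the case where $g(t)$ has entries in $\k[\![t]\!]$ and $g(0)$ is a projection. Concretely, I would diagonalize: by the theory of elementary divisors over the DVR $\k[\![t]\!]$, write $g(t)=A(t)\diag(t^{a_1},\ldots,t^{a_n})B(t)$ with $A,B\in \GL_n(\k[\![t]\!])$ and $a_1\le\cdots\le a_n$; then $\val_t(\det g)=\sum a_i$. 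Absorbing $B(t)$ into a change of coordinates on $V$ and $A(0)$ into a change of coordinates on the target, one sees that $g(0)$ kills the linear forms $x_i$ with $a_i>0$, and $\val_t(g(t)\cdot f)>0$ forces $f$ to lie in the ideal generated by those $x_i$; hence $\srk_\k(f)\le \#\{i: a_i>0\}$. Since $\mu(g(t),f)=d\sum a_i/\val_t(g(t)\cdot f)\ge d\cdot\#\{i:a_i>0\}/\val_t(g(t)\cdot f)$ and a small computation bounds $\val_t(g(t)\cdot f)\le d\cdot\max a_i\le d\sum_{a_i>0}a_i/\#\{i:a_i>0\}$... — this last chain is where care is needed, and I would instead argue the clean direction directly: the upper bound $r^G_\k(f)\le d\cdot\srk_\k(f)$ comes from an explicit test one-parameter subgroup. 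If $f=l_1f_1+\cdots+l_r f_r$ with $r=\srk_\k(f)$, extend $l_1,\ldots,l_r$ to a basis and let $g(t)=\diag(t,\ldots,t,1,\ldots,1)$ scaling the $l_i$ by $t$; then $\val_t(\det g)=r$ and $\val_t(g(t)\cdot f)\ge 1$ (in fact one checks $=1$ generically), giving $\mu\le dr$. For the lower bound, the correct statement to extract is: if $\val_t(g(t)\cdot f)=e$, then after the Smith-normal-form reduction above, $f$ lies in the ideal generated by at most $\sum_{i}\min(a_i, e/1)\cdot(\text{something})$... — the honest approach is that $g(t)$ acts on $x_i$ with valuation $a_i\ge 0$, so on a degree-$d$ monomial the valuation of the image is the sum of $d$ of the $a_i$'s, hence $\val_t(g(t)\cdot f)\le \max$ over monomials in $f$ of that sum. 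If all monomials of $f$ avoid every variable with $a_i\ge 1$ we would contradict $\val_t(g(t)\cdot f)>0$; more quantitatively, $\val_t(g(t)\cdot f)\le d\cdot a_n$ and, writing $k=\#\{i:a_i\ge 1\}$, one gets $\srk_\k(f)\le k\le \sum_{a_i\ge1}a_i=\val_t(\det g)$ when the $a_i\in\{0,1\}$; the general case follows by replacing $g(t)$ by the one-parameter subgroup with exponents $\min(a_i,1)$, which only decreases $\val_t(\det g)$ while keeping $\val_t(g(t)\cdot f)>0$, hence does not increase $\mu$ below $d\cdot\srk_\k(f)$ — combined with monotonicity $\srk_\k\le r^G_\k$ this closes (i). The collection version is identical, working in ${\bigwedge}^s S^dV$ with the observation $\val_t(g(t)\cdot f_1\we\cdots\we f_s)\ge \sum\val_t(g(t)\cdot f_i)$ and the definition's factor $ds$.

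For (ii), the key input is Lemma \ref{power-lem}, which gives $r^G_\k(f_i^{m_i})=r^G_\k(f_i)$, so it suffices to prove subadditivity $r^G_\k(h_1+\cdots+h_r)\le \sum r^G_\k(h_i)$ for homogeneous $h_i$ of the same degree $d$. Given optimal one-parameter subgroups $g_i(t)$ for each $h_i$ acting on a common $V$ (after enlarging $V$ so that the $h_i$ live in disjoint groups of variables $V=V_1\oplus\cdots\oplus V_r$, which does not change any $r^G_\k$ since adding dummy variables with $g$ acting trivially alters neither $\val_t(\det)$ nor $\val_t(g\cdot h)$), I would take $g(t)=\bigoplus_i g_i(t^{N_i})$ for suitable reparametrizations $N_i$ chosen so that all the blocks acquire the same $\val_t(g_i(t^{N_i})\cdot h_i)$; then $\val_t(g(t)\cdot(h_1+\cdots+h_r))=\min_i\val_t(g_i(t^{N_i})\cdot h_i)$ equals that common value $e$, while $\val_t(\det g(t))=\sum_i N_i\val_t(\det g_i)$, and the $N_i$'s are exactly calibrated so that $d\cdot N_i\val_t(\det g_i)/e = r^G_\k(h_i)$ up to the common rescaling, yielding $\mu(g(t),\sum h_i)=\sum_i r^G_\k(h_i)$. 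The final "in particular" is the special case $m_i=d$, $f_i=l_i$ a linear form, using $r^G_\k(l_i)=1$ (immediate: $g(t)=\diag(t,1,\ldots,1)$ on coordinates with $l_i=x_1$).

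The main obstacle is the lower bound $\srk_\k(f)\le r^G_\k(f)$ in part (i), precisely because $r^G_\k$ is defined as an infimum over all $g(t)\in G(\k[\![t]\!])$ — not just genuine one-parameter subgroups — so one must argue that a general such $g(t)$ can be put into diagonal (Smith) form over the DVR $\k[\![t]\!]$ without increasing $\mu$, and then that the diagonal exponents can be truncated to $\{0,1\}$; the perfectness hypothesis enters here to guarantee that the associated graded of the filtration on $V$ behaves well and that the reduction is defined over $\k$ rather than an extension. The subtlety is that absorbing the $\GL_n(\k[\![t]\!])$ factors $A(t), B(t)$ into coordinate changes on the source and target requires that the action of $G$ on $S^dV$ is by algebra automorphisms compatible with these substitutions, which is true but needs to be spelled out so that "$f$ lies in the ideal of the vanishing variables" is a statement over $\k$. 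Everything else is bookkeeping with valuations.
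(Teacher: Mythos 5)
Your plan matches the paper's on the easy upper bounds (both use the test subgroup $\diag(t,\ldots,t,1,\ldots,1)$ for $r^G_\k\le d\cdot\srk_\k$, and both derive the Waring-rank consequence from $r^G_\k(l^d)=1$ plus Lemma \ref{power-lem}), but the two harder pieces — the lower bound $\srk_\k\le r^G_\k$ and the subadditivity used in part (ii) — have real gaps, and in both places the paper does something different.

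\textbf{Lower bound $\srk_\k\le r^G_\k$.} After reducing to a diagonal $g(t)=\diag(t^{a_1},\ldots,t^{a_n})$, you propose to truncate the exponents to $b_i=\min(a_i,1)$, claiming this ``does not increase $\mu$ below $d\cdot\srk$,'' and then close with ``monotonicity $\srk_\k\le r^G_\k$'' — which is exactly the inequality you are trying to prove, so the argument is circular, and in any case the truncation is not monotone for $\mu$. Concretely, with $n=2$, $d=2$, $f=e_1^2$, $a=(2,1)$: one gets $\val_t(g\cdot f)=4$, $\val_t(\det g)=3$, so $\mu=3/2$; after truncating to $(1,1)$ one gets $\mu'=2>\mu$. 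The paper's Proposition \ref{Grk-srk-prop2} avoids this entirely by a threshold device: set $e=\val_t(g(t)\cdot f_1\we\cdots\we f_s)$ and
$$S=\{\,i\in[1,n]\ :\ \val_t(g(t)\cdot e_i)\ \ge\ e/(ds)\,\},$$
so that $\val_t(\det g)\ge|S|\cdot e/(ds)$, hence $|S|<r$ once $\mu<r$; and then one shows directly that some nontrivial combination of the $f_i$ has all monomials divisible by a variable indexed by $S$, since otherwise a wedge of $s$ monomials supported off $S$ would already have valuation $<e$, a contradiction. This single clean inequality replaces your attempted $\{0,1\}$ reduction and gives $\srk\le|S|<r$ at once. (Your Smith-normal-form reduction $g=ADB$ to a diagonal $D$ acting on $f'=B(0)\cdot f$ is in fact salvageable — $\mu(g,f)\ge\mu(D,f')$ because $\val_t(D\cdot(B\cdot f))\le\val_t(D\cdot f')$ — and is a reasonable alternative to the paper's appeal to the Hilbert–Mumford–Kempf reduction to $\k$-rational one-parameter subgroups; but you still need the threshold argument after diagonalizing, not the truncation.)

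\textbf{Subadditivity for part (ii).} You propose to ``enlarge $V$ so that the $h_i$ live in disjoint groups of variables,'' build a block-diagonal $g=\bigoplus_i g_i(t^{N_i})$, and calibrate the $N_i$. The block computation is fine for the disjoint polynomial $\tilde h=\tilde h_1+\cdots+\tilde h_r$ on $V^{\oplus r}$, but the statement you need is about $h=h_1+\cdots+h_r$ on $V$ itself. The pullback of $\tilde h$ along the diagonal $V\hookrightarrow V^{\oplus r}$ is indeed $h$, but a witness $g'(t)\in\GL(V^{\oplus r})(\k[\![t]\!])$ for $\tilde h$ does not preserve the diagonal, so it does not induce a witness on $V$; you would need a monotonicity of $r^G$ under arbitrary linear pullback, which is neither stated nor obvious. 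The paper's Proposition \ref{tr-ineq-prop} instead proves the triangle inequality on a fixed $V$ directly, by appealing to Derksen's lattice-refinement lemma (\cite[Lem.~3.5]{Derksen}): given $g_1(t),g_2(t)$, there is $u(t)=u_1(t)g_1(t)=u_2(t)g_2(t)$ with $u_i\in\GL_n(\k[\![t]\!])$ and $\val_t(\det u)\le\val_t(\det g_1)+\val_t(\det g_2)$, which immediately yields $\mu(u,f_1+f_2)\le\mu(g_1,f_1)+\mu(g_2,f_2)$ after normalizing the $\val_t(g_i\cdot f_i)$ to be equal by reparametrizing $t$. That lemma is the missing ingredient; without it (or an equivalent common-refinement-of-lattices statement) your subadditivity is not established, and the ``in particular'' Waring bound in part (ii) inherits the same gap.
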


The proof will be given in Sec.\ \ref{Grank-slice-rank-proof-sec} after some preparations.
The argument is very close to the one in \cite{Derksen}.

\subsubsection{Relation to the GIT stability}

Let $W$ be a finite dimensional algebraic representation of $G=\GL(V)$ over $\k$. Recall that a point $w\in W$ is called
{\it $G$-semistable} if the orbit closure $\ov{G\cdot v}$ does not contain $0$. 
Recall that Kempf's $\k$-rational version of the Hilbert-Mumford criterion (see \cite{Kempf}) 
states (assuming $\k$ is perfect) that 
if $w$ is not $G$-semistable then there exists a $1$-parameter subgroup $\la:\G_m\to G$ {\it defined over $\k$}
such that $\lim_{t\to 0} \la(t)\cdot w=0$. Here $\la$ has form $g\cdot \diag(t^{\la_1},\ldots,t^{\la_n})\cdot g^{-1}$
for some $g\in G(\k)$ and $\la_i\in\Z$.

\begin{prop}\label{GIT-prop} 
For integers $p\ge 0$ and $q>0$, let us consider the $G$-representation
$$W=({\bigwedge}^s S^dV)^{\ot p}\ot {\det}^{-dsq}\oplus V^n.$$
Let $u\in V^n$ be a fixed element of rank $n$. Then we have $r^G_\k(f_1,\ldots,f_s)\ge \frac{p}{q}$ if and only if
$w=((f_1\we\ldots\we f_s)^{\ot p}\ot 1,u)$ is $G$-semistable.
\end{prop}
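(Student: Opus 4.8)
The plan is to translate the definition of $r^G_\k(f_1,\ldots,f_s)$ directly into the language of the Hilbert--Mumford criterion, using the extra factor $V^n$ and the fixed full-rank element $u$ to force the relevant $1$-parameter subgroups to have non-negative weights on $V$. First I would recall that by Kempf's theorem (valid since $\k$ is perfect), $w=((f_1\we\ldots\we f_s)^{\ot p}\ot 1,u)$ is \emph{not} $G$-semistable if and only if there is a $1$-parameter subgroup $\la:\G_m\to G$ defined over $\k$ with $\lim_{t\to 0}\la(t)\cdot w=0$. Since $u$ has rank $n$, the condition $\lim_{t\to 0}\la(t)\cdot u=0$ forces all the weights of $\la$ on $V$ to be strictly positive; conversely any such $\la$ can be conjugated (over $\k$, since $\la$ is itself already of the form $g\,\diag(t^{\la_i})\,g^{-1}$) to a diagonal one-parameter subgroup with positive integer weights $\la_1,\ldots,\la_n>0$. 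Thus semistability fails iff there exists $g\in G(\k)$ and integers $\la_i>0$ such that, setting $g(t)=g\cdot\diag(t^{\la_1},\ldots,t^{\la_n})\cdot g^{-1}\in G(\k[\![t]\!])$, the vector $\la(t)\cdot w$ vanishes as $t\to 0$.

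Next I would unwind what vanishing of $\la(t)\cdot w$ means on the summand $({\bigwedge}^s S^dV)^{\ot p}\ot{\det}^{-dsq}$. On this factor, $\la(t)$ acts on $(f_1\we\ldots\we f_s)^{\ot p}$ with a pole/zero governed by $\val_t\bigl(\la(t)\cdot(f_1\we\ldots\we f_s)\bigr)$ raised to the $p$-th power, while the twist by ${\det}^{-dsq}$ contributes $t^{-dsq\cdot\val_t(\det\la(t))}$. So $\lim_{t\to 0}$ of this component is $0$ precisely when
$$p\cdot \val_t\bigl(\la(t)\cdot (f_1\we\ldots\we f_s)\bigr) > dsq\cdot \val_t(\det\la(t)),$$
equivalently $\mu(\la(t),f_1,\ldots,f_s)=ds\cdot\frac{\val_t(\det\la(t))}{\val_t(\la(t)\cdot f_1\we\ldots\we f_s)}<\frac{p}{q}$. (Here I should note that $\val_t(\la(t)\cdot f_1\we\ldots\we f_s)>0$ automatically, since a non-semistabilizing $\la$ cannot fix $f_1\we\ldots\we f_s$ up to scalar once we also kill the $\det$-twist; and one has to check that allowing general $g(t)\in G(\k[\![t]\!])$ in the infimum defining $r^G_\k$ gives the same value as restricting to one-parameter subgroups $g\,\diag(t^{\la_i})\,g^{-1}$ — this is the standard reduction, using that $\mu$ only depends on $\val_t$ of two quantities and that the ``leading term'' of $g(t)$ can be absorbed.) Combining the two summands: $w$ is not $G$-semistable iff $\inf_{\la}\mu(\la(t),f_1,\ldots,f_s)<\frac{p}{q}$, which by the previous reduction equals $r^G_\k(f_1,\ldots,f_s)<\frac{p}{q}$. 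Negating, $w$ is $G$-semistable iff $r^G_\k(f_1,\ldots,f_s)\ge \frac{p}{q}$, as claimed.

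The main obstacle I anticipate is the bookkeeping around the infimum in the definition of $r^G_\k$: showing that it is attained (or at least approached) by honest one-parameter subgroups defined over $\k$, rather than by more general $\k[\![t]\!]$-points of $G$, and that the strict-vs-non-strict inequalities match up correctly with the open condition of semistability. The clean way to handle this is to prove two implications separately — if $r^G_\k(f_1,\ldots,f_s)<p/q$ then some $g(t)$ witnesses it, and after a Smith-normal-form / $\k(\!(t)\!)$-valued-point argument one extracts a genuine $\k$-rational one-parameter subgroup with $\mu<p/q$, hence $w$ is not semistable; conversely if $w$ is not semistable, Kempf gives the $\k$-rational $\la$ directly and one reads off $\mu(\la,f_1,\ldots,f_s)<p/q$. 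The rank-$n$ condition on $u$ is exactly what is needed to rule out $\la$ with some non-positive weight on $V$, so that the normalization in $\mu$ (with $\val_t(\det g(t))=\sum\la_i>0$) is consistent. Apart from this, the argument is a routine homogeneity computation: both sides of the asserted equivalence are invariant under scaling $(\la_i)\mapsto(N\la_i)$ and under $(p,q)\mapsto(Np,Nq)$, so it suffices to compare the two numerical quantities $p\cdot\val_t(\la(t)\cdot f_1\we\ldots\we f_s)$ and $dsq\cdot\val_t(\det\la(t))$.
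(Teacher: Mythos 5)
Your outline is correct, and the easy direction (``not semistable $\Rightarrow r^G<p/q$'') is exactly the paper's argument: Kempf produces a $\k$-rational one-parameter subgroup $\la$, the rank-$n$ condition on $u$ forces all its weights to be positive so that $\la(t)\in G(\k[t])$, and then comparing $p\cdot\val_t(\la(t)\cdot f_1\we\ldots\we f_s)$ with $dsq\cdot\val_t(\det\la(t))$ gives $\mu(\la(t),f_1,\ldots,f_s)<p/q$.

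Where you diverge from the paper is the converse direction. You propose to take a witness $g(t)\in G(\k[\![t]\!])$ with $\mu(g(t),\ldots)<p/q$ and extract from it, via Smith normal form / the Cartan decomposition of $G(\k(\!(t)\!))$, an honest $\k$-rational one-parameter subgroup with the same $\mu$-bound, and only then conclude non-semistability through Hilbert--Mumford again. That works, but it is heavier than what is needed, and it implicitly reproves the reduction ``in the definition of $r^G_\k$ one may restrict to one-parameter subgroups,'' which the paper instead \emph{derives as a corollary of this very proposition}. The paper's converse argument avoids one-parameter subgroups entirely: truncate $g(t)$ to a point of $G(\k[t])$ without changing the relevant valuations, observe that $\lim_{t\to 0}g(t)\cdot w=(0,g(0)\cdot u)$, so this limit lies in $\overline{G\cdot w}$; and since $G$ (indeed already $\G_m\subset G$ acting by scalars) contracts $V^n$ to $0$, the point $(0,0)$ lies in $\overline{G\cdot(0,g(0)\cdot u)}\subseteq\overline{G\cdot w}$, hence $w$ is not semistable. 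So your argument is not wrong, but the Smith-normal-form step you flag as the ``main obstacle'' is unnecessary: non-semistability only needs $0$ in the orbit closure, not a destabilizing one-parameter subgroup, and the fixed full-rank $u$ plus truncation hands you that directly.
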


\begin{proof} By Hilbert-Mumford-Kempf's criterion, if $w$ is not $G$-semistable then there exists a $1$-parameter subgroup
$\la:\G_m\to G$ over $\k$, such that $\lim_{t\to 0}\la(t)\cdot w=(0,0)$. In particular, we have $\lim_{t\to 0}\la(t)\cdot u=0$,
so $\la(t)\in G(\k[t])$, and
$$\val_t(\la(t)\cdot (f_1\we\ldots\we f_s)^{\ot p}\ot 1)=p\cdot \val_t(\la(t)\cdot f_1\we\ldots\we f_s)-dsq\cdot \val_t\det(\la(t))>0,$$
which implies that $\val_t(\la(t)\cdot f_1\we\ldots\we f_s)>0$ and
$$\mu(\la(t),f_1,\ldots,f_s)<\frac{p}{q}.$$
Hence, $r^G(f_1,\ldots,f_s)<\frac{p}{q}$.

Conversely, assume there exists $g(t)\in G(\k[\![t]\!])$ such that $\val_t(g(t)\cdot  f_1\we\ldots\we f_s)>0$ and
$\mu(g(t),f_1,\ldots,f_s)<\frac{p}{q}$, i.e.,
$$\val_t(g(t)\cdot (f_1\we\ldots\we f_s)^{\ot p}\ot 1)>0.$$
Truncating $g(t)$ at high enough order in $t$, we can assume that $g(t)\in G(\k[t])$.
Then the fact that $\lim_{t\to 0} g(t)\cdot w=(0,g(0)\cdot u)$ implies that $(0,g(0)\cdot u)$ lies in the closure of the $G$-orbit of $w$. 
Since $0$ lies in the closure
of the $G$-orbit of $g(0)\cdot u$, we see that $(0,0)$ lies in the closure of $G\cdot w$, so $w$ is not $G$-semistable.
\end{proof}

As a consequence of Proposition \ref{GIT-prop}, in the definition of $r^G_\k(f_1,\ldots,f_s)$ it is enough to take $g(t)$ to be a $1$-parameter
subgroup of $G$ defined over $\k$.
Also, since $G$-semistability does not change under the base field extension, we deduce the following 

\begin{cor}\label{Grk-field-change-cor} 
Let $\ov{\k}$ be an algebraic closure of $\k$.
Then one has 
$$r^G_{\k}(f_1,\ldots,f_s)=r^G_{\ov{\k}}(f_1,\ldots,f_s).$$
\end{cor}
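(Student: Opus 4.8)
Corollary \ref{Grk-field-change-cor}: The plan is to deduce the equality $r^G_{\k}(f_1,\ldots,f_s)=r^G_{\ov{\k}}(f_1,\ldots,f_s)$ from Proposition \ref{GIT-prop} together with the fact that GIT-semistability is insensitive to base field extension. The key observation is that Proposition \ref{GIT-prop} characterizes the condition $r^G_\k(f_1,\ldots,f_s)\ge \frac{p}{q}$ (for $p\ge 0$, $q>0$) purely in terms of the $G$-semistability of the explicit point $w=((f_1\we\ldots\we f_s)^{\ot p}\ot 1, u)$ in the representation $W=({\bigwedge}^s S^dV)^{\ot p}\ot{\det}^{-dsq}\oplus V^n$, where $u\in V^n$ is any fixed element of rank $n$ (for instance, a basis of $V$).

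First I would note that $r^G_\k(f_1,\ldots,f_s)$, being an infimum of rationals of bounded denominator type arising from $1$-parameter subgroups, is itself a rational number (this already follows from Proposition \ref{GIT-prop}, since the set of $\frac{p}{q}$ bounding it from below is an interval of rationals). Hence it suffices to show that for every rational $\frac{p}{q}$ with $p\ge 0$, $q>0$, we have $r^G_\k(f_1,\ldots,f_s)\ge \frac{p}{q}$ if and only if $r^G_{\ov{\k}}(f_1,\ldots,f_s)\ge \frac{p}{q}$. By Proposition \ref{GIT-prop} applied over $\k$ and over $\ov{\k}$, the left-hand inequality is equivalent to $G$-semistability of $w$ over $\k$, and the right-hand inequality is equivalent to $G_{\ov{\k}}$-semistability of the same point $w$ (viewed over $\ov{\k}$) in $W\ot_\k\ov{\k}$. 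Note that $f_1,\ldots,f_s$ are assumed defined over $\k$, so $w$ itself is defined over $\k$, and the element $u$ can be chosen defined over $\k$ as well, so both sides really refer to the same $\k$-point.

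The remaining point is the standard fact that for a point $w$ of a $\k$-representation $W$ of a reductive group $G$ (here $G=\GL(V)$), $w$ is $G$-semistable over $\k$ if and only if $w\ot_\k\ov{\k}$ is $G_{\ov{\k}}$-semistable: the orbit closure $\ov{G\cdot w}$ is a closed subscheme of $W$ defined over $\k$, and it contains $0$ if and only if its base change $\ov{G\cdot w}\times_\k\ov{\k}$ contains $0$; moreover this base change equals the orbit closure of $w\ot\ov{\k}$ since the orbit map commutes with base change and taking closures commutes with flat (in particular, field) base change. I would either cite this or spell it out in a sentence. This is the main (and essentially only) obstacle, but it is a well-known generality; everything else is formal bookkeeping translating Proposition \ref{GIT-prop} into the claimed equality.
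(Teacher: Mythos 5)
Your proposal is correct and follows essentially the same approach as the paper, which states the corollary as an immediate consequence of Proposition \ref{GIT-prop} together with the invariance of $G$-semistability under base field extension; you have simply spelled out the bookkeeping that the paper leaves implicit. (One small remark: the claim that $r^G_\k$ is rational is not needed and not fully justified as stated — it is enough to observe that two real numbers with the same set of rational lower bounds are equal.)
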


Let $T\sub G$ denote the maximal torus, i.e., the group of diagonal matrices with respect to a $\k$-basis $(e_i)$ of $V$.
Replacing $G$ everywhere by $T$ we get a notion of $T$-rank, $r^T_\k(f_1,\ldots,f_s)$.
From Hilbert-Mumford-Kempf criterion we get
$$r^G_\k(f_1,\ldots,f_s)=\inf_{g\in G(\k)} r^T_\k(g\cdot (f_1,\ldots,f_s)).$$

The reason we introduced the factor $ds$ in the definition of $r^G_\k(f_1,\ldots,f_s)$ is so as to have the following normalization property.

\begin{lemma}\label{ge1-ineq-lem}
One has $r^G_\k(f_1,\ldots,f_s)\ge 1$.
\end{lemma}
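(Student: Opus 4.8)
The plan is to reduce the inequality $r^G_\k(f_1,\ldots,f_s)\ge 1$ to the corresponding statement for the torus $T$, using the identity
$$r^G_\k(f_1,\ldots,f_s)=\inf_{g\in G(\k)} r^T_\k(g\cdot(f_1,\ldots,f_s))$$
that was established above from the Hilbert--Mumford--Kempf criterion. Since $g\cdot(f_1,\ldots,f_s)$ is again a tuple of linearly independent degree-$d$ forms, it suffices to prove $r^T_\k(f_1,\ldots,f_s)\ge 1$ for an arbitrary such tuple.

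For the torus, a one-parameter subgroup is $\la(t)=\diag(t^{\la_1},\ldots,t^{\la_n})$ with $\la_i\in\Z$, so $\val_t(\det\la(t))=\sum_i\la_i$ (after discarding subgroups for which this is nonpositive, which cannot contribute a finite positive value of $\mu$). Working in the monomial basis of $S^dV$, the form $f_1\we\cdots\we f_s\in{\bigwedge}^s S^dV$ is a combination of wedge products $e^{\a_1}\we\cdots\we e^{\a_s}$ of distinct degree-$d$ monomials $e^{\a_j}$, each of which is an eigenvector of $\la(t)$ with weight $\langle\la,\a_j\rangle$. Hence $\val_t(\la(t)\cdot f_1\we\cdots\we f_s)$ equals the minimum, over the monomial basis elements $e^{\a_1}\we\cdots\we e^{\a_s}$ actually occurring in $f_1\we\cdots\we f_s$, of $\sum_{j=1}^s\langle\la,\a_j\rangle$. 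The key combinatorial point is that the $s$ exponent vectors $\a_1,\ldots,\a_s$ appearing in any single such basis element are \emph{distinct} multi-indices, each summing to $d$; therefore $\sum_{j=1}^s\a_j$ is a vector of nonnegative integers with total sum $ds$, and — crucially — it is the sum of $s$ distinct vectors in the set of degree-$d$ monomial exponents. I would then show $\sum_{j=1}^s\langle\la,\a_j\rangle=\langle\la,\sum_j\a_j\rangle\le s\cdot\max_i\la_i\cdot\frac{\text{something}}{\ }$; more precisely, since each $\a_j$ has coordinate sum $d$, one has $\langle\la,\a_j\rangle\le d\max_i\la_i$, and summing gives $\val_t(\la(t)\cdot f_1\we\cdots\we f_s)\le ds\max_i\la_i$. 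Comparing with $\val_t(\det\la(t))=\sum_i\la_i$, and noting we may assume $\la_i\ge 0$ (shifting all $\la_i$ by a constant changes $\la(t)\cdot f$ by an overall power of $t$ in a way that can only help, so the extremal case has $\min_i\la_i=0$, hence $\sum_i\la_i\ge\max_i\la_i$), we get
$$\mu(\la(t),f_1,\ldots,f_s)=ds\cdot\frac{\sum_i\la_i}{\val_t(\la(t)\cdot f_1\we\cdots\we f_s)}\ge ds\cdot\frac{\max_i\la_i}{ds\max_i\la_i}=1.$$

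The main obstacle I anticipate is the bookkeeping around normalizing the one-parameter subgroup: one must be careful that replacing $\la$ by $\la-(\min_i\la_i)\cdot(1,\ldots,1)$ is legitimate (it rescales both numerator and denominator, or rather shifts the denominator by a fixed multiple of $d$ times the shift while shifting the numerator by $n$ times the shift) and actually only decreases $\mu$, so that restricting to $\la_i\ge 0$ with at least one $\la_i=0$ loses nothing. Equivalently, one can phrase everything in terms of the projectivized weights and invoke that $\mu$ is invariant under $\la\mapsto c\la$ for $c>0$ and use the relation between $r^G$ and semistability from Proposition~\ref{GIT-prop}, reducing to the statement that the relevant point is semistable when $p=q$; but the direct torus computation above seems cleanest. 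Once the sign normalization is pinned down, the inequality $\langle\la,\a_j\rangle\le d\max_i\la_i$ for a degree-$d$ exponent $\a_j$ is immediate, and the result follows.
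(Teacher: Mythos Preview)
Your approach is essentially the same as the paper's: reduce to the torus via $r^G_\k=\inf_{g\in G(\k)}r^T_\k(g\cdot(f_1,\ldots,f_s))$, then bound the $t$-valuation of each monomial wedge. The paper's execution is a bit more direct, though. Since any $g(t)\in T(\k[\![t]\!])$ has diagonal entries in $\k[\![t]\!]$, their valuations $c_i$ are automatically $\ge 0$; hence for a degree-$d$ monomial $M=e_1^{a_1}\cdots e_n^{a_n}$ one gets
\[
\val_t(g(t)\cdot M)=\sum_i a_ic_i\le\Bigl(\sum_i a_i\Bigr)\Bigl(\sum_i c_i\Bigr)=d\cdot\val_t(\det g(t)),
\]
and summing over $s$ monomials immediately yields $\val_t(g(t)\cdot M_1\we\cdots\we M_s)\le ds\cdot\val_t(\det g(t))$, i.e.\ $\mu\ge 1$. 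This avoids your detour through $\max_i\la_i$ and makes the normalization discussion unnecessary. In fact your shifting justification is not quite right as stated: replacing $\la$ by $\la+c\cdot(1,\ldots,1)$ changes $\mu$ from $ds\cdot A/B$ to $ds\cdot(A+nc)/(B+dsc)$, which can move either way depending on whether $\mu\gtrless n$; but you never needed it, since $\la_i\ge 0$ is forced by $\la(t)\in T(\k[\![t]\!])$.
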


\begin{proof} It is enough to check that for any $g(t)\in T(\k[\![t]\!])$ and any distinct monomials $M_1,\ldots,M_s$ of $(e_i)$ in $S^dV$, one has
$$\val_t(g(t)\cdot M_1\we\ldots\we M_s)\le ds\cdot \val_t(\det(g(t))).$$
Let $c_1,\ldots,c_n\ge 0$ be the valuations of the diagonal entries of $g(t)$, so that
$\val_t(\det(g(t)))=c_1+\ldots+c_n$.
Then for a monomial $M=e_1^{a_1}\ldots e_n^{a_n}$, we have
$$\val_t(g(t)\cdot M)=a_1c_1+\ldots+a_nc_n\le (a_1+\ldots+a_n)(c_1+\ldots+c_n)=d(c_1+\ldots+c_n).$$
Hence, $\val_t(g(t)\cdot M_1\we\ldots\we M_s)\le ds$,
which gives the required inequality.
\end{proof}

%\subsection{Some basic properties}

%\subsubsection{Invariance under algebraic extensions}

%\subsubsection{Semicontinuity}

\subsection{Triangle inequality}

\begin{prop}\label{tr-ineq-prop}
For $f_1,f_2\in S^dV$ one has $r^G_\k(f_1+f_2)\le r^G_\k(f_1)+r^G_\k(f_2)$.
\end{prop}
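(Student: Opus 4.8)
The plan is to follow Derksen's subadditivity argument: given $\epsilon>0$, pick near-optimal degenerations of $f_1$ and of $f_2$ separately and splice them into a single degeneration of $f_1+f_2$, then let $\epsilon\to 0$. By the remark following Proposition \ref{GIT-prop} the degeneration of $f_i$ may be taken to be a $1$-parameter subgroup $\la_i$ over $\k$, and since Kempf's criterion produces a $\la_i$ that collapses a fixed full-rank tuple $u\in V^n$ to $0$, we may moreover assume all weights of $\la_i$ are strictly positive. Writing $a_i=\val_t(\la_i(t)\cdot f_i)>0$ and $b_i=\val_t\det\la_i$, we then have $\mu(\la_i,f_i)=db_i/a_i<r^G_\k(f_i)+\epsilon$. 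Replacing $\la_i(t)$ by $\la_i(t^c)$ scales $a_i$ and $b_i$ by the same factor $c$ and so leaves $\mu(\la_i,f_i)$ unchanged; using this I would first normalize so that $a_1=a_2=:a$.

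The heart of the matter is to produce $g(t)\in G(\k[\![t]\!])$ with $\val_t(g(t)\cdot(f_1+f_2))>0$ and $\mu(g(t),f_1+f_2)\le\mu(\la_1,f_1)+\mu(\la_2,f_2)$. I would treat first the case where $\la_1$ and $\la_2$ lie in one common maximal torus, say both diagonal in a basis $(e_j)$ of $V$: then $\la_i$ is encoded by a weight vector $w^{(i)}\in\Z^n_{>0}$, and I claim $g(t)$ can be taken to be the torus degeneration with weight $w:=w^{(1)}+w^{(2)}$. Indeed, for a weight vector $w$ set $A_j(w):=\val_t$ of the associated degeneration applied to $f_j$, so that $A_j(w)=\min_M\lan w,a(M)\ran$ where $M$ runs over the monomials occurring in $f_j$ and $a(M)$ is its exponent vector; being a minimum of linear functionals of $w$, the function $A_j$ is superadditive, hence $A_j(w)\ge A_j(w^{(1)})+A_j(w^{(2)})\ge a$, the last inequality because $A_j(w^{(j)})=a$ while each $A_j(w^{(i)})\ge 0$. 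Since $\supp(f_1+f_2)\subset\supp(f_1)\cup\supp(f_2)$ this gives $\val_t(g(t)\cdot(f_1+f_2))\ge\min(A_1(w),A_2(w))\ge a>0$, while $\val_t\det g(t)=\sum_j w_j=b_1+b_2$; therefore $\mu(g(t),f_1+f_2)\le d(b_1+b_2)/a=db_1/a+db_2/a=\mu(\la_1,f_1)+\mu(\la_2,f_2)$, as required.

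The step I expect to be the main obstacle is the general case, where $\la_1$ and $\la_2$ do not lie in a common torus. One cannot merely compose them: a direct computation shows that $g(t)=\la_1(t^{\a})\la_2(t^{\b})$ does annihilate $f_1+f_2$, but its value of $\mu$ only gets worse as $\mu(\la_i,f_i)\to r^G_\k(f_i)$, because after applying the degeneration adapted to $f_2$ the polynomial $f_1$ ends up in too shallow a stage of the $\la_1$-filtration, so superadditivity of valuations breaks down once the two tori differ. To get around this I would argue through the $G$-semistability reformulation of Proposition \ref{GIT-prop}, reducing the triangle inequality to the assertion that if, for suitable $p_1,p_2,q$, neither $(f_1^{\ot p_1}\ot 1,u)$ nor $(f_2^{\ot p_2}\ot 1,u)$ is $G$-semistable then $((f_1+f_2)^{\ot(p_1+p_2)}\ot 1,u)$ is not $G$-semistable either; here one uses that the non-semistable locus is closed and $G$-invariant, together with a suitable $G$-equivariant comparison of the three ambient representations in the spirit of Derksen's device. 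Making this comparison precise — in particular choosing $p_1,p_2$ so that the mixed terms of $(f_1+f_2)^{\ot(p_1+p_2)}$ are controlled — is the technical crux I would expect to spend the most effort on.
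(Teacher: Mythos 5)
Your reduction to a common maximal torus is correct, and in that case the weight-vector superadditivity argument does deliver the inequality. But you rightly flag that the general case, where $\la_1$ and $\la_2$ lie in different tori, is the real content, and there the proposal has a genuine gap: the $G$-semistability route you sketch has no concrete mechanism behind it. There is no natural $G$-equivariant comparison between the representations housing $(f_1^{\otimes p_1}\otimes 1,u)$, $(f_2^{\otimes p_2}\otimes 1,u)$ and $((f_1+f_2)^{\otimes(p_1+p_2)}\otimes 1,u)$ that would transfer non-semistability, precisely because $(f_1+f_2)^{\otimes(p_1+p_2)}$ expands into mixed terms $f_1^{\otimes a}\otimes f_2^{\otimes b}$ whose degenerations under a single one-parameter subgroup you cannot control without already knowing how to reconcile $\la_1$ and $\la_2$.

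The missing tool is \cite[Lem.~3.5]{Derksen}, and the paper's proof uses it directly, bypassing Proposition \ref{GIT-prop} entirely. That lemma says: given $g_1(t),g_2(t)$ in the monoid of $n\times n$ matrices over $\k[\![t]\!]$ with nonzero determinant, there exists $u(t)$ in the same monoid with $u(t)=u_1(t)g_1(t)=u_2(t)g_2(t)$, each $u_i(t)$ integral, and $\val_t\det u(t)\le\val_t\det g_1(t)+\val_t\det g_2(t)$. (It is a lattice argument: set $L:=g_1^{-1}\k[\![t]\!]^n+g_2^{-1}\k[\![t]\!]^n\supset\k[\![t]\!]^n$, take $u$ with $uL=\k[\![t]\!]^n$, and bound the index $[L:\k[\![t]\!]^n]$ by $b_1+b_2$.) With this $u$ in hand, and after normalizing $\val_t(g_1\cdot f_1)=\val_t(g_2\cdot f_2)=s$ exactly as you do, the factorization $u=u_ig_i$ with $u_i$ integral gives $\val_t(u\cdot f_i)\ge s$, hence $\val_t(u\cdot(f_1+f_2))\ge s$, and the determinant bound then yields $\mu(u,f_1+f_2)\le\mu(g_1,f_1)+\mu(g_2,f_2)$. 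Your ``add the weight vectors'' construction is precisely what $u$ becomes when $g_1,g_2$ share a torus; the lemma is what produces the analogous common dominating degeneration in general.
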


\begin{proof} This is proved exactly as \cite[Prop.\ 3.6]{Derksen}. Starting with $g_1(t),g_2(t)\in G(\k[\![t]\!])$ such that
$\val_t(g_i(t)\cdot f_i)>0$, one has to produce $u(t)\in G(\k[\![t]\!])$ with $\val_t(u(t)\cdot (f_1+f_2))>0$ and 
$$\mu(u(t),f_1+f_2)\le \mu(g_1(t),f_1)+\mu(g_2(t),f_2).$$
Making changes of variables $t\mapsto t^i$ if necessary, we can assume that
$$\val_t(g_1(t)\cdot f_1)=\val_t(g_2(t)\cdot f_2)=s>0.$$
By \cite[Lem.\ 3.5]{Derksen}, there exists $u(t)\in G(\k[\![t]\!])$ such that
$u(t)=u_1(t)g_1(t)=u_2(t)g_2(t)$ with $u_i(t)\in G(\k[\![t]\!])$ and
$$\val_t(\det u(t))\le \val_t(\det g_1(t))+\val_t(\det g_2(t)).$$
Then
$$\val_t(u(t)\cdot (f_1+f_2))\ge \min(\val_t(u_1(t)g_1(t)\cdot f_1),\val_t(u_2(t)g_2(t)\cdot f_2))\ge s,$$
and
\begin{align*}
&\frac{1}{d}\mu(u(t),f_1+f_2)=\frac{\val_t(\det u(t))}{\val_t(u(t)\cdot (f_1+f_2))}\le \\
&\frac{\val_t(\det g_1(t))+
\val_t(\det g_2(t))}{s}=\frac{1}{d}(\mu(g_1(t),f_1)+\mu(g_2(t),f_2)).
\end{align*}
\end{proof}

%\subsubsection{Additivity}

\subsection{Relation to the slice rank and to the sums of powers}\label{Grank-slice-rank-proof-sec}

%\subsection{$G$-rank of powers of linear forms and of forms of slice rank $1$}

\begin{prop}\label{Grk-srk-prop1} 
(i) Let $f=v^d$ for some $v\in V\setminus 0$. Then $r^G_\k(f)=1$.

\noindent
(ii) If $f$ is of slice rank $r$ then $r^G_\k(f)\le dr$.

\noindent
(iii) If there exists a nontrivial linear combination $c_1f_1+\ldots+c_sf_s$ that has slice rank $r$ then $r^G_\k(f_1,\ldots,f_s)\le dsr$.
\end{prop}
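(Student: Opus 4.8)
The plan is to deduce all three parts from the basic properties already established: the definition of $r^G_\k$, the normalization $r^G_\k\ge 1$ (Lemma \ref{ge1-ineq-lem}), Lemma \ref{power-lem}, and the additivity $r^G_\k(f^m)=r^G_\k(f)$. For part (i), since $f=v^d$, extend $v=e_1$ to a $\k$-basis $(e_1,\dots,e_n)$ of $V$ and take the one-parameter subgroup $g(t)=\diag(t,1,\dots,1)$. Then $\val_t(\det g(t))=1$ and $g(t)\cdot f = (te_1)^d$, so $\val_t(g(t)\cdot f)=d$, giving $\mu(g(t),f)=d\cdot 1/d = 1$. Hence $r^G_\k(v^d)\le 1$, and combined with Lemma \ref{ge1-ineq-lem} we get equality.

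For part (ii): if $f$ has slice rank $r$, write $f=l_1f_1+\dots+l_rf_r$ with $l_i$ linear forms over $\k$. The idea is to choose coordinates so that $l_1,\dots,l_r$ become part of a dual basis, i.e.\ pick a $\k$-basis $(e_1,\dots,e_n)$ of $V$ with $e_i^\vee = l_i$ for $i\le r$ (possible after discarding linearly dependent $l_i$, which only decreases $r$). In these coordinates every monomial appearing in $f$ is divisible by at least one of $e_1,\dots,e_r$. Now take $g(t)=\diag(t,\dots,t,1,\dots,1)$ with $t$ in the first $r$ slots. Then $\val_t(\det g(t))=r$, and since each monomial of $f$ is divisible by some $e_i$ with $i\le r$, we get $\val_t(g(t)\cdot f)\ge 1$. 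Therefore $\mu(g(t),f)=d\cdot r/1 = dr$, so $r^G_\k(f)\le dr$.

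For part (iii): suppose $c_1f_1+\dots+c_sf_s$ has slice rank $r$; after a $\k$-linear change of basis of the span $\lan f_1,\dots,f_s\ran$ we may assume $f_1$ itself has slice rank $r$ (this uses that the $f_i$ are linearly independent and that the slice rank is invariant under the relevant coordinate changes, and it does not affect $f_1\we\dots\we f_s$ up to a nonzero scalar). Choosing coordinates as in part (ii) so that every monomial of $f_1$ is divisible by one of $e_1,\dots,e_r$, and taking the same $g(t)=\diag(t,\dots,t,1,\dots,1)$, we get $\val_t(\det g(t))=r$ and $\val_t(g(t)\cdot(f_1\we\dots\we f_s))\ge \val_t(g(t)\cdot f_1)\ge 1$ — since each wedge factor acquires at least a factor of $t$ from the corresponding term, and more precisely the valuation of the wedge is at least the minimum over $i$ of $\val_t(g(t)\cdot f_i)$, but we only need the cruder bound coming from $f_1$. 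Actually the correct bound is that $\val_t(g(t)\cdot f_1\we\cdots\we f_s)\ge \val_t(g(t)\cdot f_1)+\sum_{i\ge 2}\val_t(g(t)\cdot f_i)\ge 1$; in any case it is $\ge 1$. This yields $\mu(g(t),f_1,\dots,f_s)=ds\cdot r/1 = dsr$.

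The only mildly delicate point is the bookkeeping in part (iii): one must be careful that replacing the collection $f_1,\dots,f_s$ by a new basis of its span changes $f_1\we\dots\we f_s$ only by a nonzero scalar (hence preserves the valuation), and that the monomial-divisibility argument gives the valuation bound for the wedge product and not just for a single factor. Everything else is a direct substitution of an explicit diagonal one-parameter subgroup into the definition of $\mu$, so I expect no real obstacle beyond organizing these reductions cleanly.
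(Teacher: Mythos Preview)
Your proof is correct and matches the paper's almost verbatim: in each part you choose the same diagonal one-parameter subgroup $g(t)=\diag(t,\ldots,t,1,\ldots,1)$ and read off $\mu$ directly. One aside in part (iii) is off --- the valuation of the wedge is bounded below by the \emph{sum}, not the minimum, of the factor valuations --- but you never actually use that claim, and the inequality you do invoke, $\val_t\bigl(g(t)\cdot(f_1\wedge\cdots\wedge f_s)\bigr)\ge \val_t\bigl(g(t)\cdot f_1\bigr)\ge 1$, is exactly what the paper uses.
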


\begin{proof} (i) By Lemma \ref{ge1-ineq-lem}, $r^G_\k(f)\ge 1$, so it is enough to find $g(t)\in G(\k[t])$
such that $\mu(g(t),v^d)=1$. We can assume that $v=e_1$, and take 
$$g(t)=\diag(t,1,\ldots,1).$$
Then $\val_t(g(t)\cdot e_1^d)=d$ and $\val_t(\det(g))=1$.
(Alternatively, we can use Lemma \ref{power-lem} to reduce to the easy case $d=1$.)

\noindent
(ii) We can assume that $f=e_1\cdot f_1+\ldots+e_r\cdot f_r$. Then for 
$g(t)=\diag(\underbrace{t,\ldots,t}_{r},1,\ldots,1)$, we have $\val_t(g(t)\cdot f)\ge 1$, while $\val_t(\deg(g))=r$, so
$$\mu(g(t),f)\le d\cdot \frac{r}{\val_t(g(t)\cdot f)}\le dr.$$

\noindent
(iii) If this is the case then $f_1\we\ldots\we f_s$ has form $(e_1h_1+\ldots+e_rh_r)\we\ldots$, hence, for the same
$g(t)$ as in (ii), we have $\val_t(g(t)\cdot f_1\we\ldots\we f_s)\ge 1$.
\end{proof}

%Taking into account the triangle inequality we get the following result.

\begin{prop}\label{Grk-srk-prop2} 
One has 
%$s_\k(f)\le r^G_\k(f)$. More generally,
$$%\min_{c_1,\ldots,c_s}s_\k(c_1f_1+\ldots+c_sf_s)
\srk_\k(f_1,\ldots,f_s)\le r^G_\k(f_1,\ldots,f_s).$$
\end{prop}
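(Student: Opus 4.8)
The plan is to prove the contrapositive-flavored statement directly: given any $g(t)\in G(\k[\![t]\!])$ with $\val_t(g(t)\cdot f_1\we\ldots\we f_s)>0$, we want to extract a nontrivial linear combination of the $f_i$ whose slice rank is at most $\mu(g(t),f_1,\ldots,f_s)$. By Corollary \ref{Grk-field-change-cor} and the Hilbert--Mumford--Kempf remark, it suffices to treat the case where $g(t)=\la(t)$ is a one-parameter subgroup, and after conjugating by an element of $G(\k)$ we may assume $\la(t)=\diag(t^{\la_1},\ldots,t^{\la_n})$ with $\la_i\in\Z$; reordering the basis $(e_i)$, we may assume $\la_1\ge\la_2\ge\cdots\ge\la_n$. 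Replacing $\la$ by $\la$ minus a scalar matrix does not change $\mu$, so we may also normalize so that $\la_n=0$ (all $\la_i\ge 0$, not all zero), and rescale $t$ so that the $\la_i$ are coprime if convenient. Write $a=\val_t(\la(t)\cdot f_1\we\ldots\we f_s)>0$ and $D=\val_t(\det\la(t))=\sum_i\la_i$, so the quantity to beat is $ds\cdot D/a$.

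Next I would analyze the action of $\la(t)$ on $\bigwedge^s S^dV$ in terms of a monomial basis. A monomial $M=e_1^{b_1}\cdots e_n^{b_n}\in S^dV$ is scaled by $t^{w(M)}$ where $w(M)=\sum_i b_i\la_i$. For a basis vector $M_{i_1}\we\cdots\we M_{i_s}$ of $\bigwedge^s S^dV$, the weight is $\sum_j w(M_{i_j})$. Expanding $f_1\we\cdots\we f_s$ in this basis, $a$ equals the minimum of $\sum_j w(M_{i_j})$ over all $s$-subsets of monomials appearing with nonzero coefficient in the expansion, i.e. $a=$ (minimum over $s$-element subsets $S$ of monomials that appear in some $f_i$ and are "jointly independent" in the sense of contributing to a nonzero Plücker coordinate) of $\sum_{M\in S} w(M)$. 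The key combinatorial step is: order the monomials $M$ by increasing weight $w(M)$, and let $W_k$ be the span of the monomials of the $k$ smallest weights; the condition $a>0$ forces, for the appropriate threshold, that the span of $f_1,\ldots,f_s$ meets $W$ for $W$ a span of low-weight monomials. Concretely, pick the largest index $p$ such that among the monomials $M$ with $w(M)<a/s$... — rather, the cleanest route: since $a=\val_t(\la(t)\cdot(f_1\we\cdots\we f_s))$, the image of $\la(t)$ on the top-weight part vanishes, which means that the $f_i$, after applying $\la(t)$ and dividing, degenerate so that their span $F$ satisfies $F\cap(\text{span of monomials of weight}\ge c)$ has positive dimension for suitable $c$; equivalently there is a nonzero $f\in F$ all of whose monomials have weight $\ge c$ for $c$ as large as the averaging argument allows. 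Then such an $f$ lies in the ideal generated by $\{e_i : \la_i>0\}$ raised to appropriate powers... and more precisely, grouping the $e_i$ with $\la_i$ above the various thresholds produces a slice-rank bound.

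The honest hard part is the bookkeeping that converts the valuation data $(a, D=\sum\la_i)$ into a slice-rank estimate of the form $ds\cdot D/a$. I would mirror the tensor argument of Derksen \cite{Derksen}: replace $\la$ by a sum of "step" functions, i.e. write $\la_i$ as an integer combination of indicator vectors $\mathbf 1_{[1,m]}$, apply convexity so that $\mu$ only decreases when passing to one of the pieces, and thereby reduce to the case where $\la(t)=\diag(\underbrace{t,\ldots,t}_{m},1,\ldots,1)$ for a single $m$. In that case $D=m$, and for $f$ a generic combination of the $f_i$ with $\val_t(\la(t)\cdot f)=b$, the monomials of $\la(t)\cdot f$ all have weight $\ge b$; but a monomial in $n$ variables of total degree $d$ has weight $\le \min(d,m)$ under this $\la$, and weight $\ge b$ forces degree $\ge b$ in the variables $e_1,\ldots,e_m$. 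Then $f$ is divisible in each monomial by a degree-$b$ part in $e_1,\ldots,e_m$, so $f\in(e_1,\ldots,e_m)^b$, and since $b\le d$ a term-by-term factorization shows $f$ has slice rank at most $\binom{m+b-1}{b}\le$... actually one only needs slice rank $\le m$ when $b\ge 1$, giving $\srk(f)\le m = D$, hence $ds\cdot D/a\ge ds\ge \srk$; combined with taking the infimum over all one-parameter subgroups this yields $\srk_\k(f_1,\ldots,f_s)\le r^G_\k(f_1,\ldots,f_s)$. Thus the main obstacle is purely the convexity/discretization reduction to a single step function and the careful choice of which linear combination of the $f_i$ to take — once that is set up, the slice-rank bound is immediate. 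For $s=1$ this is exactly Proposition \ref{Grk-srk-prop1}(ii) run in reverse, and I expect the general $s$ case to require only that the Plücker-coordinate analysis above is done with enough care to guarantee the combination is nonzero.
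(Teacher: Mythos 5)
Your high-level plan matches the paper's: diagonalize, analyze weights of monomials, and extract a nontrivial linear combination $f$ of the $f_i$ whose slice rank is small. But at the two places where the real work happens you leave genuine gaps, and in one of them what you sketch would not quite work.

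First gap: you propose to reduce, via a convexity/discretization argument à la Derksen, to the case $\la(t)=\diag(t,\ldots,t,1,\ldots,1)$. You never carry this out, and it is not needed: the paper avoids it entirely by a simple thresholding. Set $a=\val_t(g(t)\cdot f_1\we\ldots\we f_s)$ and $S=\{\,i:\val_t(g(t)\cdot e_i)\ge a/(ds)\,\}$. Then $\val_t(\det g(t))\ge\sum_{i\in S}\val_t(g(t)\cdot e_i)\ge |S|\cdot a/(ds)$, so the hypothesis $\mu<r$ forces $|S|<r$ at once, with no convexity or reduction to a single step. Your route could in principle be made to work, but the burden of proving that $\mu$ does not increase on the pieces of the decomposition would be a real argument and you haven't supplied it.

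Second, and more serious, gap: you assert that a ``generic combination'' of the $f_i$ will lie in $(e_1,\ldots,e_m)^b$. This is the wrong quantifier. What is needed is that \emph{some nonzero} combination $f=c_1f_1+\ldots+c_sf_s$ has every monomial divisible by some $e_i$ with $i\in S$ — equivalently, that the projection $\lan f_1,\ldots,f_s\ran\to\k[e_i: i\notin S]$ has a nontrivial kernel. A generic combination will typically \emph{not} lie in that kernel. The paper proves existence by contradiction: if the projection were injective, then one could find distinct degree-$d$ monomials $M_1,\ldots,M_s$ in the variables $\{e_i:i\notin S\}$ with $M_1\we\ldots\we M_s$ appearing with nonzero coefficient in $f_1\we\ldots\we f_s$; by the choice of $S$ each such $M_j$ has weight $<d\cdot a/(ds)=a/s$, so the wedge has weight $<a$, contradicting $a=\val_t(g(t)\cdot f_1\we\ldots\we f_s)$. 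That contradiction argument is the heart of the $s>1$ case, and it is precisely the ``Plücker-coordinate analysis done with enough care'' that you flag at the end but do not perform. Without it, the proposal does not establish the inequality.
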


\begin{proof}
Suppose $r^G_\k(f_1,\ldots,f_s)<r$. Then there exists a $1$-parameter subgroup $g(t)$
such that 
$$r\cdot\val_t(g(t)\cdot f_1\we\ldots\we f_s)> ds\cdot \val_t(\det(g(t))).$$
We can assume that $g(t)$ is diagonal with respect to some basis $(e_1,\ldots,e_n)$ of $V$.
Now consider the set
$$S:=\{ i\in [1,n] \ |\ \val_t(g(t)\cdot e_i)\ge \frac{\val_t(g(t)\cdot f_1\we\ldots\we f_s)}{ds}\}.$$
Note that
$$\val_t(\deg(g(t)))\ge \sum_{i\in S} \val_t(g(t)\cdot e_i)\ge |S|\cdot \frac{\val_t(g(t)\cdot f_1\we\ldots\we f_s)}{ds},$$
hence, 
$$|S|<r.$$

We claim that there exists a nontrivial linear combination $f=c_1f_1+\ldots+c_sf_s$ such that all the monomials
appearing in $f$ are divisible by some $e_i$ with $i\in S$. Indeed, otherwise, the projection
$$\lan f_1,\ldots,f_s\ran\to \k[e_1,\ldots,e_n]\to \k[e_1,\ldots,e_n]/(e_i \ |\ i\in S)\simeq \k[e_i \ | i\not\in S]$$
is injective, so there exist $s$ distinct monomials $M_1,\ldots,M_s$ of degree $d$ in $\k[e_i \ | i\not\in S]$ such that
$M_1\we\ldots\we M_s$ appears with a nonzero coefficient in $f_1\we\ldots\we f_s$. But then by the choice of $S$,
$$\val_t(g(t)\cdot f_1\we\ldots \we f_s)\le \val_t(g(t)\cdot M_1\we\ldots\we M_s)<\val_t(g(t)\cdot f_1\we\ldots\we f_s)$$
which is a contradiction, proving our claim. Now for the obtained linear combination $f$ we have 
$$\srk_\k(f)\le |S|<r.$$
\end{proof}

\medskip

\begin{proof}[Proof of Theorem \ref{Grk-thm}]
(i) This follows from Proposition \ref{Grk-srk-prop1}(iii) and Proposition \ref{Grk-srk-prop2}.

\noindent
(ii) This follows from Proposition \ref{tr-ineq-prop} (the triangle inequality), Lemma \ref{power-lem} and Proposition \ref{Grk-srk-prop1}(i) (for
the part concerning the Waring rank).
\end{proof}

\medskip

\begin{proof}[Proof of Theorem A]
We combine Theorem \ref{Grk-thm}(i) with Corollary \ref{Grk-field-change-cor}:
$$\srk_{\k}(f_1,\ldots,f_s)\le r_\k^G(f_1,\ldots,f_s)=r_{\ov{\k}}^G(f_1,\ldots,f_s)\le ds\cdot \srk_{\ov{\k}}(f_1,\ldots,f_s).$$
\end{proof}

\subsection{Example of a calculation of $G$-rank}

As we have seen before, for any linear form form $l$ one has $r_\k^G(l^d)=1$.
Here is the next simplest case.

\begin{prop} 
Assume that $n=2$. Then for any $m>0$, one has 
$$r_\k^G(x_1^{2m}x_2^m)=\frac{3}{2}.$$
\end{prop}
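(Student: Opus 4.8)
The plan is to prove matching lower and upper bounds for $r_\k^G(x_1^{2m}x_2^m)$. For the upper bound, it suffices to exhibit a one-parameter subgroup $g(t)=\diag(t^a,t^b)\in T(\k[\![t]\!])$ with $a,b\ge 0$ achieving the value $\tfrac32$; since the monomial is $e_1^{2m}e_2^m$, we have $\val_t(g(t)\cdot e_1^{2m}e_2^m)=2ma+mb$ and $\val_t(\det g(t))=a+b$, so $\mu(g(t),f)=3\cdot\tfrac{a+b}{2ma+mb}=\tfrac{3(a+b)}{m(2a+b)}$. Taking $a=0$, $b=1$ gives $\mu=\tfrac{3}{m}$, and more to the point, taking $a$ large relative to $b$ does not help; the right choice is to balance the two coordinates' contributions. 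Setting $2ma+mb$ as large as possible for fixed $a+b$ means making $a$ large, but then $\val_t(g\cdot f)=2ma+mb\le 2m(a+b)$, giving $\mu\ge \tfrac{3}{2m}$ — wait, this suggests the infimum could be smaller. I would instead recall (as the paper does, via Lemma \ref{power-lem}) that $r^G_\k(x_1^{2m}x_2^m)=r^G_\k(x_1^2x_2)$, reducing to the case $m=1$, and then optimize $\mu(g(t),x_1^2x_2)$ directly over diagonal $g$, and also over all of $G$ using $r^G_\k(f)=\inf_{g\in G(\k)}r^T_\k(g\cdot f)$.

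For the upper bound $\le \tfrac32$ in the reduced case $f=x_1^2x_2$: I would try a non-diagonal one-parameter subgroup, or allow $g(t)$ to move $f$ to a polynomial with higher $t$-valuation. The natural candidate is $g(t)=\diag(t,t)$ composed with something, but a cleaner route is to use Theorem \ref{Grk-thm}(ii): since $x_1^2x_2$ is not itself a power, this does not directly apply, so instead I would search for an explicit $g(t)\in\GL_2(\k[\![t]\!])$, e.g. of the form $\begin{pmatrix} t^{a} & t^{b}\\ t^{c} & t^{d}\end{pmatrix}$ or a unipotent-times-diagonal product, computing $\val_t$ of the transformed cubic and of the determinant, and optimizing the ratio to hit $\tfrac32$. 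For the lower bound $r_\k^G(x_1^2x_2)\ge \tfrac32$, I would invoke Proposition \ref{GIT-prop}: it suffices to show that the vector $w=((f)^{\ot 2}\ot 1, u)\in (\bigwedge^1 S^3V)^{\ot 2}\ot\det^{-6}\oplus V^2$ is $G$-semistable, i.e. the $G$-orbit closure of $f^{\ot 2}\ot 1$ (with the fixed full-rank $u$) avoids $0$; equivalently, by Hilbert–Mumford, that for every one-parameter subgroup $\la$ over $\ov\k$ with $\lim\la(t)u=0$ one has $\mu(\la(t),f)\ge\tfrac32$. Since any such $\la$ is conjugate to a diagonal $\diag(t^{\la_1},t^{\la_2})$ with $\la_i\ge 0$, and $G$-conjugation lets me assume the acted-upon cubic is a general $\GL_2$-translate of $x_1^2x_2$, the computation reduces to: for a generic binary cubic $h$ (those in the orbit of $x_1^2x_2$, namely cubics with a double root) and any $\la_1,\la_2\ge 0$ not both zero, show $3(\la_1+\la_2)/\val_t(\diag(t^{\la_1},t^{\la_2})\cdot h)\ge \tfrac32$, i.e. $\val_t(\diag(t^{\la_1},t^{\la_2})\cdot h)\le 2(\la_1+\la_2)$.

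The key structural point is that a cubic in the $\GL_2$-orbit of $x_1^2x_2$, written in a basis diagonalizing $\la$, has the form $\a e_1^3+\b e_1^2 e_2+\ga e_1 e_2^2+\de e_2^3$ with the discriminant condition that it has a repeated linear factor but is not a pure cube. Then $\val_t(\la(t)\cdot h)=\min(3\la_1+v_\a,\,2\la_1+\la_2+v_\b,\,\la_1+2\la_2+v_\ga,\,3\la_2+v_\de)$ where $v_\bullet\in\{0,\infty\}$ records whether the coefficient vanishes. Because the cubic is not a pure power of $e_1$ or $e_2$, at least two of the coefficients are nonzero, and in fact the double-root structure forces that we cannot have only $\a$ and $\de$ nonzero with both $\b=\ga=0$; a short case analysis on which coefficients vanish shows $\val_t(\la(t)\cdot h)\le 2(\la_1+\la_2)$ in every case (the worst case being when exactly the ``middle'' monomials survive). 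Combining the two bounds gives $r_\k^G(x_1^2x_2)=\tfrac32$, hence $r_\k^G(x_1^{2m}x_2^m)=\tfrac32$ by Lemma \ref{power-lem}.

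\textbf{Main obstacle.} The delicate step is the lower bound: one must be sure that \emph{every} one-parameter subgroup — equivalently, after the reduction $r^G_\k(f)=\inf_{g\in G(\k)}r^T_\k(g\cdot f)$, every diagonal torus action on every $\GL_2$-translate of $x_1^2x_2$ — fails to drive the ratio below $\tfrac32$. Equivalently one must characterize exactly which binary cubics lie in the orbit closure of $x_1^2x_2$ (cubics with a repeated root, including $e_1^3$ and $e_2^3$ and $0$), and handle the boundary orbits correctly; the GIT reformulation via Proposition \ref{GIT-prop} is what makes this finite and clean, so the real work is the monomial-valuation case analysis above, which I expect to be short but requires care about which coefficients can simultaneously vanish.
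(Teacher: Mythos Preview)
Your reduction to $m=1$ via Lemma \ref{power-lem} matches the paper. The upper bound, however, is fumbled: your formula $\mu=3\cdot\tfrac{a+b}{2ma+mb}$ uses degree $3$ instead of the correct degree $3m$, which is what led you to the spurious $\tfrac{3}{2m}$. With the right degree, the diagonal $g(t)=\diag(t,1)$ already gives $\mu=\tfrac{3m\cdot 1}{2m}=\tfrac32$; there is no need to search for anything non-diagonal. The paper does exactly this one-line computation.

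For the lower bound the two arguments genuinely diverge. The paper works directly: it takes an arbitrary $g(t)=\begin{pmatrix}a&b\\c&d\end{pmatrix}\in G(\k[\![t]\!])$, expands
\[
g\cdot x_1^2x_2=a^2c\,x_1^3+a(ad+2bc)\,x_1^2x_2+b(bc+2ad)\,x_1x_2^2+b^2d\,x_2^3,
\]
and proves $\mu(g,x_1^2x_2)\ge\tfrac32$ by a three-way case split on whether $v(ad)$ is larger than, smaller than, or equal to $v(bc)$, using only elementary valuation inequalities. Your route instead invokes the Hilbert--Mumford reduction $r^G_\k(f)=\inf_{g_0\in G(\k)}r^T_\k(g_0\cdot f)$, so that one only has to bound $\mu(\diag(t^{\lambda_1},t^{\lambda_2}),h)$ for $h=g_0\cdot(x_1^2x_2)$ in the $\GL_2$-orbit. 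The key observation then is simply that at least one of the two \emph{middle} coefficients $\beta,\gamma$ of $h$ is nonzero (if both vanished, $h$ would be either a pure cube or $\alpha e_1^3+\delta e_2^3$ with three distinct roots, neither of which lies in the orbit of $x_1^2x_2$), whence $\val_t(\diag(t^{\lambda_1},t^{\lambda_2})\cdot h)\le 2\lambda_1+\lambda_2$ or $\le\lambda_1+2\lambda_2$, in either case $\le 2(\lambda_1+\lambda_2)$. This is correct and arguably cleaner than the paper's case analysis; the trade-off is that it relies on Proposition \ref{GIT-prop} and the GIT machinery, whereas the paper's argument is entirely self-contained.
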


\begin{proof}
By Lemma \ref{power-lem}, it is enough to prove that
$$r_\k^G(x_1^2x_2)=\frac{3}{2}.$$
Considering $g(t)=\diag(t,1)$, we immediately see that $r_\k^G(x_1^2x_2)\le 3/2$.

Now consider any 
$$g=\left(\begin{matrix} a & b \\ c & d\end{matrix}\right)\in G(\k[\![t]\!]).$$
It is enough to prove that $\mu(g,x_1^2x_2)\le 3/2$.
We have 
$$g\cdot x_1^2x_2=a^2c\cdot x_1^3+a(ad+2bc)\cdot x_1^2x_2+b(bc+2ad)\cdot x_1x_2^2+b^2d\cdot x_2^3.$$
Let us abbreviate $v(\cdot)=\val_t(\cdot)$, etc.
Set $s:=v(g\cdot x_1^2x_2)$.
Then we have 
$$2v(a)+v(c)\ge s, \ v(a)+v(ad+2bc)\ge s, \ v(b)+v(bc+2ad)\ge s, \ 2v(b)+v(d)\ge s.$$

We consider three cases.

\noindent
{\bf Case $v(ad)>v(bc)$.}

Then we have $v(\det(g))=v(bc)$ and $v(bc+2ad)=v(bc)$. Hence, from the above inequalities we get
$v(b)+v(bc)\ge s$, hence, $2v(bc)\ge s$, so $v(\det(g))=v(bc)\ge s/2$, and so $\mu(g,x_1^2x_2)\ge 3/2$.

\noindent
{\bf Case $v(ad)<v(bc)$.}

Then we have $v(\det(g))=v(ad)$ and $v(ad+2bc)=v(ad)$. Hence,
$2v(ad)\ge v(a)+v(ad)\ge s$, and we again get $v(\det(g))\ge s/2$. 

\noindent
{\bf Case $v(ad)=v(bc)$.}

Set $t=v(ad)=v(bc)$. Then we have $v(\det(g))\ge t$. Now by the above inequalities,
$$4t=2v(ad)+2v(bc)\ge (2v(a)+v(c))+(2v(b)+v(d))\ge 2s,$$
which again implies $v(\det(g))\ge s/2$.
\end{proof}

\section{Linear subspaces of minimal codimension in cubics}

\subsection{Some general observations}

Let $f\in \k[V]$ be a nonzero homogeneous polynomial of slice rank $r$, and let $X\sub\P V$ be the corresponding projective hypersurface. We are interested in the intersection 
$$L_f:=\cap_{L\sub X, \codim_{\P V}L=r} L \sub \P V.$$
Recall that we are looking for an estimate for the codimension of $L_f$.
The case $r=1$ is straightforward:

\begin{lemma}\label{rk1-lem} 
Let $f$ be a homogeneous polynomial of degree $d$ and slice rank $1$. Then there are most $d$ hyperplanes contained in $X$, so $\codim_{\P V} L_f\le 3$.
\end{lemma}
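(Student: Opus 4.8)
The plan is to reduce to the case where $f$ has slice rank exactly $1$, so we may write $f = l \cdot g$ for some linear form $l$ and some homogeneous $g$ of degree $d-1$, and then to analyze directly which hyperplanes $H = (h = 0)$ can be contained in $X = (f = 0)$. Since $X$ is a hypersurface (not all of $\P V$), containment $H \subset X$ is equivalent to $h \mid f$ in the polynomial ring $\k[V]$, because $\k[V]$ is a UFD and the ideal of $H$ is the principal prime ideal $(h)$.

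First I would note that a hyperplane $H = (h=0)$ lies in $X$ if and only if the linear form $h$ divides $f$. Now factor $f$ into irreducibles in $\k[V]$ (or better, over $\ov{\k}$, to be safe about the ground field): since $\deg f = d$, the polynomial $f$ has at most $d$ linear factors counted with multiplicity, hence at most $d$ distinct linear factors up to scalar. Each such linear factor $h_i$ (up to scalar) gives exactly one hyperplane $H_i = (h_i = 0) \subset X$, and conversely every hyperplane contained in $X$ arises this way. So there are at most $d$ hyperplanes contained in $X$.

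Next, since slice rank $1$ means at least one hyperplane is contained in $X$, the set $\{H_1, \ldots, H_k\}$ of hyperplanes in $X$ is nonempty with $1 \le k \le d$. Then $L_f = \bigcap_{i=1}^k H_i$ is a linear subspace cut out by $k \le d$ linear forms, so $\codim_{\P V} L_f = \dim \sspan(h_1, \ldots, h_k) \le k \le d$. (The statement of the lemma writes "$\le 3$", which is the specialization to the cubic case $d = 3$ relevant to this section; the general bound is $\le d$.)

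I do not expect any real obstacle here: the only mild point to be careful about is the field of definition — one should check that the count "at most $d$ linear factors" is unaffected by passing to $\ov{\k}$, and that $L_f$, defined as the intersection over all linear subspaces of minimal codimension, coincides with the intersection of the finitely many hyperplanes, which is immediate once one observes every codimension-$1$ linear subspace in $X$ is one of the $H_i$. One should also handle the trivial edge case $d = 1$ (then $f$ is itself a linear form, $X = H$ is the unique hyperplane, and $L_f = X$ has codimension $1$), but this is subsumed in the general argument.
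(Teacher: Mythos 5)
Your proof is correct and is essentially the argument the paper has in mind (the lemma is stated without proof; the introduction calls the case $r=1$ "an easy exercise"): since $\srk(f)=1$, hyperplane containment $H=(h=0)\subset X$ is equivalent to $h\mid f$, and $f$ of degree $d$ has at most $d$ distinct linear factors up to scalar, so $L_f$ is an intersection of at most $d$ hyperplanes. Your observation that the correct general bound is $\codim L_f\le d$ rather than the printed ``$\le 3$'' is also right: the ``$\le 3$'' is evidently the specialization to the cubic case $d=3$ relevant to that section (and is a typo as a general statement), which is consistent both with the introduction's claim $c(1,d)=d$ and with the way the lemma is actually invoked in the proof of Theorem C(ii), where the bound $\le d$ is used for arbitrary $d$.
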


Since the slice rank is determined in terms of ideals $(P)\sub \k[V]$ generated by subspaces $P$ of linear forms, we record some easy observations about such ideals.

\begin{lemma}\label{flat-lem}
Let $A\sub B$ be an extension of commutative rings, such that $B$ is flat as $A$-algebra. Then for any pair of ideals $J_1,J_2\sub A$, one has
$$(J_1\cdot B)\cap (J_2\cdot B)=(J_1\cap J_2)\cdot B.$$
In particular, for a collection of linear subspaces $P_i\sub W$, $i=1,\ldots,s$, where $W\sub V^*$ is a subspace, we have
$$P_1\k[V]\cap\ldots P_s\k[V]=(P_1S(W)\cap\ldots\cap P_sS(W))\cdot \k[V].$$
\end{lemma}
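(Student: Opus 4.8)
The plan is to prove Lemma \ref{flat-lem} in two stages: first the abstract statement about flat ring extensions, then the concrete consequence for polynomial rings. For the first part, the key point is that flatness of $B$ over $A$ makes $-\ot_A B$ an exact functor, so it preserves the kernel of any map of $A$-modules. I would start from the exact sequence of $A$-modules
$$0\to J_1\cap J_2\to A\to A/J_1\oplus A/J_2,$$
where the right-hand map is $a\mapsto (a+J_1, a+J_2)$; its kernel is exactly $J_1\cap J_2$. Tensoring with $B$ and using flatness, we get that $(J_1\cap J_2)\ot_A B$ is the kernel of $B\to B/J_1B\oplus B/J_2B$, i.e.\ $(J_1\cap J_2)B = J_1B\cap J_2B$ inside $B$. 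One small thing to check along the way is that the natural map $(J_1\cap J_2)\ot_A B\to B$ is injective and has image $(J_1\cap J_2)B$, and that $J_i\ot_A B\to B$ likewise has image $J_iB$; these all follow from flatness applied to $0\to J_i\to A$ and $0\to J_1\cap J_2\to A$.

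For the second part, I would apply the first part with $A=S(W)$ and $B=\k[V]=S(V^*)$. The ring $\k[V]$ is a polynomial ring over $S(W)$ — choosing a complement $W'$ to $W$ in $V^*$ gives $\k[V]\simeq S(W)[W']$ — hence free, in particular flat, over $S(W)$. Taking $J_i = P_i S(W)$, which are honest ideals of $S(W)$, the abstract statement immediately gives
$$P_1\k[V]\cap\cdots\cap P_s\k[V] = (P_1S(W)\cap\cdots\cap P_sS(W))\cdot\k[V].$$
Here I should note that the displayed formula for $s$ ideals (rather than just two) follows by an obvious induction on $s$ from the two-ideal case, using that $J_1B\cap\cdots\cap J_sB = (J_1B\cap\cdots\cap J_{s-1}B)\cap J_sB$ and that an intersection of extended ideals $(J_1\cap\cdots\cap J_{s-1})B$ is again an extended ideal to which the two-ideal statement applies.

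I do not anticipate a genuine obstacle here; this is a standard homological-algebra fact and the only care needed is bookkeeping. The one point worth being slightly attentive to is making sure the map into $A/J_1\oplus A/J_2$ is the right one so that its kernel is $J_1\cap J_2$ and not, say, $J_1+J_2$; and, in the application, confirming that $\k[V]$ really is flat over the subring $S(W)$ generated by a linear subspace $W\subset V^*$, which is clear since it is even a free module on monomials in a basis of a complement of $W$.
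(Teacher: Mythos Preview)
Your argument is correct and follows essentially the same approach as the paper: both use flatness of $B$ over $A$ to tensor an exact sequence whose kernel is $J_1\cap J_2$, and then specialize to the flat extension $S(W)\sub \k[V]$. The only cosmetic difference is that the paper tensors the sequence $0\to J_1\cap J_2\to J_1\oplus J_2\to A$ rather than your $0\to J_1\cap J_2\to A\to A/J_1\oplus A/J_2$, and leaves the induction on $s$ and the freeness of $\k[V]$ over $S(W)$ implicit.
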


\begin{proof} Since for any ideal $J\sub A$ the natural map $J\ot_A B\to J\cdot B$ is an isomorphism in this case, the assertion follows by applying the exact functor $?\ot_A B$ to the exact sequence
$$0\to J_1\cap J_2\to J_1\oplus J_2\to A.$$
For the last statement we apply this to the flat extension of rings $S(W)\sub S(V^*)=\k[V]$.
\end{proof}

\begin{lemma}\label{ideals-intersection-lem}
Let $P_1,\ldots,P_s\sub V^*$ be subspaces such that the ideal $(P_1)^{a_1}\cap \ldots \cap (P_s)^{a_s}$ contains no nonzero homogeneous polynomials of degree $m$, for
some powers $a_i\ge 1$.
Then we have an inclusion of ideals in $\k[V]$,
$$(P_1)^{a_1}\cap\ldots\cap(P_s)^{a_s}\sub (W)^{m+1}.$$
where $W=P_1+\ldots+P_s$.
In particular, if $P_1\cap\ldots \cap P_s=0$ then  
$$(P_1)^m\cap\ldots\cap(P_s)^m\sub (W)^{m+1}.$$
\end{lemma}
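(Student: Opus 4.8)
The plan is to reduce everything to a statement about ideals inside the subalgebra $S(W)\subset \k[V]$ and then to an elementary commutative-algebra fact about the $\mathfrak m$-adic behaviour of ideals generated by linear forms inside a polynomial ring.

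First I would set $W=P_1+\dots+P_s$ and invoke Lemma \ref{flat-lem} for the flat extension $S(W)\subset \k[V]$: since each $P_i\subset W$, the ideal $(P_i)^{a_i}$ in $\k[V]$ is the extension of the ideal $P_i^{a_i}S(W)$ in $S(W)$, and Lemma \ref{flat-lem} (applied iteratively to intersections) gives
$$(P_1)^{a_1}\cap\dots\cap(P_s)^{a_s}=\bigl(P_1^{a_1}S(W)\cap\dots\cap P_s^{a_s}S(W)\bigr)\cdot\k[V].$$
Hence it suffices to prove the inclusion inside $S(W)$, namely $J:=P_1^{a_1}S(W)\cap\dots\cap P_s^{a_s}S(W)\subset \mathfrak m^{m+1}$, where $\mathfrak m=W\cdot S(W)$ is the irrelevant maximal ideal of $S(W)$, because extending ideals is compatible with containments and $\mathfrak m^{m+1}\cdot\k[V]=(W)^{m+1}$.

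Next I would prove this inclusion degree by degree. The ideal $J$ is homogeneous, so $J=\bigoplus_k J_k$ with $J_k\subset S^k(W)$; the hypothesis says $J_m=0$. I claim $J_k=0$ for all $k\le m$ and therefore $J\subset\mathfrak m^{m+1}$, which is exactly the desired statement. To see $J_k=0$ for $k\le m$: given a nonzero homogeneous $h\in J_k$ with $k<m$, multiply it by a suitable homogeneous element to land in degree $m$ while staying in every $P_i^{a_i}S(W)$. The cleanest way is to pick, for each $i$, a linear form $\ell_i\in P_i$ (here I use $a_i\ge 1$; if $a_i\ge 1$ then $P_i^{a_i}S(W)$ contains $\ell_i^{a_i}$ and, more to the point, multiplication by any element of $S(W)$ keeps us inside $P_i^{a_i}S(W)$ once we are already in it). Since $h\in P_i^{a_i}S(W)$ for every $i$ and $P_i^{a_i}S(W)$ is an ideal of $S(W)$, for \emph{any} homogeneous $g\in S^{m-k}(W)$ we get $gh\in P_i^{a_i}S(W)$ for all $i$, hence $gh\in J_m=0$. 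As $S(W)$ is a domain and $h\ne 0$, this forces $g=0$ for all $g\in S^{m-k}(W)$, i.e. $S^{m-k}(W)=0$; but $W\ne 0$ (assuming the setup is nontrivial; if $W=0$ the statement is vacuous since then $\k[V]=\k[V]$ and $m\ge 0$ gives $(W)^{m+1}=(0)$ only when $m\ge 0$, and indeed all $P_i=0$ so the intersection is $(0)$), a contradiction. Therefore $J_k=0$ for $k\le m$, so $J\subset\bigoplus_{k\ge m+1}S^k(W)=\mathfrak m^{m+1}$, and extending to $\k[V]$ gives $(P_1)^{a_1}\cap\dots\cap(P_s)^{a_s}\subset(W)^{m+1}$.

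Finally, the last assertion is the special case $a_1=\dots=a_s=m$: if $P_1\cap\dots\cap P_s=0$, then already in degree $m$ the ideal $(P_1)^m\cap\dots\cap(P_s)^m$ has no nonzero elements—indeed a degree-$m$ element of $(P_i)^m$ is a sum of products of $m$ linear forms from $P_i$, and one checks directly (or by the linear-algebra fact that $\bigcap_i P_i^mS(V^*)$ in degree $m$ equals $S^m(P_1\cap\dots\cap P_s)$ when the $P_i$ are subspaces of linear forms) that this common space is $S^m(P_1\cap\dots\cap P_s)=S^m(0)=0$—so the general case applies with that value of $m$. The only point requiring a little care, and the step I expect to be the mild obstacle, is justifying that in degree $m$ one has $\bigcap_i (P_i)^m{}_{,m}=S^m(\bigcap_i P_i)$; this is a standard fact about ideals generated by linear forms in a polynomial ring (it follows, e.g., from Lemma \ref{flat-lem} applied to the flat extensions $S(P_i)\subset S(W)$ together with the observation that $(P_i)^m$ in degree $m$ is precisely $S^m(P_i)$), so it fits the framework already established.
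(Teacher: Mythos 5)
Your proof is correct and follows the paper's approach exactly: reduce to $S(W)$ via Lemma \ref{flat-lem}, then observe that vanishing in degree $m$ forces vanishing in all degrees $\le m$ (which you justify with the clean multiplication-up-to-degree-$m$ argument in a domain, a step the paper treats as self-evident), and handle the ``in particular'' via $S^m(P_1)\cap\ldots\cap S^m(P_s)=S^m(P_1\cap\ldots\cap P_s)$. One small quibble: the parenthetical suggestion that the last identity follows from Lemma \ref{flat-lem} applied to $S(P_i)\subset S(W)$ does not quite parse, since that lemma intersects extensions of ideals from a \emph{single} flat subring; but the adapted-monomial-basis justification you also give is the right one, so this does not affect the argument.
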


\begin{proof}
Applying Lemma \ref{flat-lem} to the extension of rings $S(W)\sub S(V^*)=\k[V]$, we reduce to the case when $W=V^*$. 
But then the first statement reduces to the fact that if the ideal $I=(P_1)^{a_1}\cap\ldots\cap(P_s)^{a_s}$ does not contain polynomials of degree $m$
then $I\sub (x_1,\ldots,x_n)^{m+1}$.

To prove the second statement we need to check that $(P_1)^m\cap \ldots \cap (P_s)^m$ does not contain any homogeneous polynomials of degree $\le m$.
This is clear in degrees $<m$ and in degree $m$ follows from the statement that
$$0=S^m(P_1\cap\ldots\cap P_s)=S^m(P_1)\cap\ldots\cap S^m(P_s)\sub S^mW,$$
since $P_1\cap\ldots\cap P_s=0$.
\end{proof}

We say that a polynomial $f\in \k[V]=S(V^*)$ is a {\it pullback from a space of dimension $m$} if there exists a linear subspace $W\sub V^*$ of
dimension $m$ such that $f\in S(W)\sub S(V^*)$.
In this case, if $f\in (P)$, where $P\sub V^*$ is a subspace of linear forms, then $f\in (W\cap P)$. In particular, the slice rank of $f$ in $S(V^*)$ can be calculated 
within $S(W)$.

\subsection{Proof of Theorem C(i)}

Theorem C(i) is a consequence of the following more precise theorem.

\begin{theorem}\label{cubics-rough-bound-thm} Let $f$ be a cubic of rank $r$, $X\sub \P V$ the corresponding hypersurface. Set
$$c(r):=\frac{1}{2}\bigl(\frac{(r+1)^2}{4}+r+3\bigr)\cdot\bigl(\frac{(r+1)^2}{4}+r\bigr).$$
Then 
\begin{itemize}
\item either all linear subspaces $L\sub X$ with $\codim_{\P V}L=r$ are contained in a fixed hyperplane,
\item or $f$ is a pullback from a space of dimension $c(r)$.
\end{itemize}
In either case $\codim L_f \le c(r)$.
\end{theorem}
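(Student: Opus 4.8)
The plan is to analyze the linear subspaces $L\subset X$ of codimension $r$ directly in terms of the ideal-theoretic structure of $f$. Write $f = l_1 g_1 + \dots + l_r g_r$ for a minimal slice-rank decomposition, so $f \in (P)$ where $P = \langle l_1,\dots,l_r\rangle$ is an $r$-dimensional space of linear forms, and $L = \P V(P)$ is one such subspace. For any other subspace $L' = \P V(Q)$ with $\dim Q = r$ and $f \in (Q)$, I would study the intersection behaviour: the key point is that $f$ lies in $(P) \cap (Q)$, and more generally in the intersection of the ideals attached to all such minimal subspaces. First I would set $W$ to be the span of all the linear forms cutting out all codimension-$r$ subspaces $L\subset X$, i.e. $W = \sum_{L\subset X,\ \codim L = r} P_L$, so that $L_f$ has codimension $\dim W$; the goal is then exactly to bound $\dim W$, and to show that either $\dim W \le c(r)$ directly or else something rigid forces all the $L$ into one hyperplane.

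The main mechanism I expect to use is Lemma \ref{ideals-intersection-lem}: if finitely many of the subspaces $P_{L_1},\dots,P_{L_s}$ already have $P_{L_1}\cap\dots\cap P_{L_s} = 0$, then $(P_{L_1})^2 \cap \dots \cap (P_{L_s})^2 \subset (W')^3$ where $W' = P_{L_1}+\dots+P_{L_s}$; combined with the cubic hypothesis this should give strong control. More precisely, for two codimension-$r$ subspaces $L, L'\subset X$ one expects $f \in (P_L)\cap(P_{L'})$, and if $P_L \cap P_{L'}$ is small then $f$ is forced to be a pullback from $\langle P_L, P_{L'}\rangle$, a space of dimension $\le 2r$. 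The quadratic-in-$r$ shape of $c(r)$, with its $\tfrac{(r+1)^2}{4}$ term, strongly suggests that the real argument is a careful case analysis organized by how much the various $P_L$'s overlap, using that each time two subspaces span more, their intersection drops in codimension, and that $f \in (P_L)$ for each. I would first handle the dichotomy: if all the $P_L$ contain a common linear form $l$ (equivalently all $L$ lie in the hyperplane $l = 0$), we are in the first bullet; otherwise I can pick subspaces whose intersection is $0$ and push $f$ into a bounded-dimensional coordinate subspace.

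So the concrete steps, in order: (1) reduce to a minimal slice-rank decomposition and record that $f\in(P_L)$ for every codimension-$r$ $L\subset X$; (2) if $\bigcap_L P_L \ne 0$, the common linear form gives a hyperplane containing all such $L$, yielding the first alternative and $\codim L_f \le c(r)$ trivially (indeed $\le$ something much smaller, but it is enough); (3) otherwise choose finitely many $L_1,\dots,L_s$ with $\bigcap P_{L_i} = 0$, set $W' = \sum P_{L_i}$; apply Lemma \ref{ideals-intersection-lem} to get $f \in (P_{L_1})^{?}\cap\dots \subset (W')^{?+1}$ — here one must be careful that $f$ lies not just in $(P_{L_i})$ but in a suitable power, which for a cubic and a genuine codimension-$r$ containment should follow from the decomposition $f = \sum_{j} l_j g_j$ with $g_j \in (P_{L_i})$ when $\srk$ is minimal; (4) bound the number $s$ of subspaces needed, and hence $\dim W'$, by an inductive/greedy argument: each new $L_i$ added either drops $\dim(\bigcap_{j\le i} P_{L_j})$ or can be discarded, and one needs at most $\sim r$ steps, but the span can grow by up to $r-1$ each step with overlaps controlled — this is where the product $\bigl(\tfrac{(r+1)^2}{4}+r+3\bigr)\bigl(\tfrac{(r+1)^2}{4}+r\bigr)$ comes from, presumably as (bound on $\dim W'$) $\times$ (something like $\dim S^2$ of an auxiliary space); (5) conclude $f$ is a pullback from $S(W)$ with $\dim W \le c(r)$, so $L_f = \P V(W)$ has $\codim L_f \le c(r)$.

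The hard part will be step (4): turning the qualitative statement "either all $L$ share a hyperplane or $f$ is a pullback from a bounded space" into the explicit quadratic bound $c(r)$, and in particular correctly accounting for how the span of successively chosen $P_{L_i}$ grows versus how fast their common intersection shrinks. A secondary subtlety is step (3): justifying that $f$ lies in the required power of $(P_{L_i})$ (not merely in $(P_{L_i})$ itself) for a \emph{minimal} codimension-$r$ subspace — this presumably uses that if $f \in (P_{L})$ with $\dim P_L = \srk f$ then the cofactors $g_j$ themselves must lie in $(P_L)$, an observation about minimal slice-rank decompositions of cubics that I would isolate as a preliminary lemma. Once those two points are pinned down, the rest is the bookkeeping that produces the stated $c(r)$.
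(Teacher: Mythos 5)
Your overall mechanism is the right one — organize the argument around the set $\PP_f$ of $r$-dimensional subspaces $P\sub V^*$ with $f\in(P)$, split into a dichotomy, and when there is no common line use an intersection-of-ideals argument to force $f$ to live in $(W)^2$ and then count variables. But there are two genuine gaps and one significant overcomplication.

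The most serious gap is step (2). You assert that once all the codimension-$r$ subspaces $L\sub X$ lie in a single hyperplane $H$, the bound $\codim L_f\le c(r)$ is ``trivial (indeed $\le$ something much smaller).'' It is not: being contained in $H$ gives no bound at all on $\codim L_f$, since the intersection of all such $L$ inside $H$ could a priori be tiny. What is true is that $f|_H$ is a cubic of slice rank exactly $r-1$ (any $P\in\PP_f$ containing $v^*$ descends to a codimension-$(r-1)$ slice of $f|_H$, and slice rank cannot drop further without contradicting $\srk f=r$), and the codimension-$r$ subspaces of $X$ are precisely the codimension-$(r-1)$ subspaces of $X\cap H$. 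So this case must be handled by \emph{induction on $r$}: apply the theorem to $f|_H$ with rank $r-1$ to get $\codim_{\P V}L_f\le c(r-1)+1\le c(r)$. You cannot discharge this case without the inductive hypothesis, and without it your proof does not close.

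The second issue is step (3). You worry that $f$ must lie in some higher power of $(P_{L_i})$, and you propose an auxiliary lemma that the cofactors $g_j$ in a minimal slice decomposition lie in $(P_{L_i})$; that is not needed and is not how the paper proceeds (nor is it true in general). All that is used is the trivial fact $f\in(P_1)\cap\ldots\cap(P_s)$ together with Lemma \ref{ideals-intersection-lem} applied with $a_i=1$, $m=1$: since $P_1\cap\ldots\cap P_s=0$, the ideal $(P_1)\cap\ldots\cap(P_s)$ contains no nonzero linear form, hence it is contained in $(W)^2$ where $W=P_1+\ldots+P_s$. Then for a cubic $f\in(W)^2$ one has $f=\sum_{i\le j}w_iw_jl_{ij}$, so $f$ is a pullback from a space of dimension at most $\dim W+\binomial{\dim W+1}{2}$; there is no need to exhibit $f$ in $(W)^3$ or to involve squares of the $(P_i)$. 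Finally, in step (4) you gesture at a greedy bound of roughly $r$ steps, each adding at most $r-1$ new dimensions, which would give $\dim W\lesssim r^2$; the paper instead proves a sharper estimate (Lemma \ref{subspaces-intersection-lem}): for an \emph{irredundant} collection $P_1,\ldots,P_s$ with zero intersection, if $a=\min_{i\ne j}\dim(P_i\cap P_j)$ then $s\le a+2$, whence $\dim W\le r+(a+1)(r-a)\le r+\frac{(r+1)^2}{4}$. Plugging $N=r+\frac{(r+1)^2}{4}$ into $\frac{N(N+1)}{2}+N=\frac{N(N+3)}{2}$ gives exactly $c(r)$; so the constant is the \emph{sum} $\dim S^2W+\dim W$ coming from the variables $w_iw_j$ and $l_{ij}$, not a product as you guessed.
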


%Let us a fix a cubic $f$ of rank $r$.

\begin{lemma}\label{subspaces-intersection-lem}
Let $P_1,\ldots,P_s\sub V^*$ be an irredundant collection of subspaces such that $P_1\cap\ldots \cap P_s=0$
(i.e., the intersection of any proper subcollection is nonzero). 
Assume that $\dim P_i\le r$ for every $i$. Then 
$$\dim(P_1+\ldots+P_s)\le r+\frac{(r+1)^2}{4}.$$
\end{lemma}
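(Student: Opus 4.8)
The plan is to bound $\dim(P_1+\ldots+P_s)$ by combining two contributions: the number $s$ of subspaces, and the individual dimensions $\dim P_i \le r$. First I would show $s \le r+1$. The key observation is irredundancy together with $P_1\cap\ldots\cap P_s = 0$: for each $i$, the subspace $Q_i := \bigcap_{j\neq i}P_j$ is nonzero, so pick $0 \neq v_i \in Q_i$. I claim the $v_i$ are linearly independent. Indeed, $v_i \in P_j$ for all $j \neq i$, but $v_i \notin P_i$ — for if $v_i \in P_i$ as well then $v_i \in \bigcap_j P_j = 0$, a contradiction. So in any linear relation $\sum_i c_i v_i = 0$, applying a linear functional that kills $P_i$ (hence kills every $v_j$ with $j\neq i$, since $v_j\in P_i$) but does not kill $v_i$ forces $c_i = 0$. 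Hence $s \le \dim V^* $, but more to the point these $v_i$ live in $P_1+\ldots+P_s$; this alone is not yet the bound, so I push further.

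Next I would get the sharper count $s \le r+1$. Consider the sum $P_1+\ldots+P_s$ and the natural surjection $\bigoplus_i P_i \twoheadrightarrow P_1+\ldots+P_s$. Alternatively, and more usefully, look at the $v_i$ above: each $v_i \in \bigcap_{j\neq i} P_j$, so for a fixed index $k$, all $v_i$ with $i \neq k$ lie in $P_k$, which has dimension $\le r$; since these $v_i$ ($i\neq k$) are linearly independent, there are at most $r$ of them, giving $s - 1 \le r$, i.e. $s \le r+1$.

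Now I would estimate $\dim(P_1+\ldots+P_s)$ itself. Write $U = P_1+\ldots+P_s$ and filter by partial sums: $\dim U \le \sum_{i} \dim(P_i / (P_i \cap (P_1+\ldots+P_{i-1})))$. The first term contributes $\le r$. For $i \ge 2$, I want to show the increment $\dim(P_i) - \dim(P_i\cap(P_1+\ldots+P_{i-1}))$ is small on average. The point is that $P_i \cap P_j \neq 0$ is forced only for the full complementary intersection, but irredundancy gives, for each $i\ge 2$, that $Q_i = \bigcap_{j \neq i} P_j \neq 0$ and in particular $P_i \cap (P_1 + \dots + P_{i-1} + P_{i+1} + \dots) \supseteq$ something nonzero is too weak; instead I would argue that once $s-1$ of the partial-sum increments are forced to be small, the total is controlled. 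Concretely: order so that the increments $\delta_i := \dim(P_i/(P_i\cap(P_1+\dots+P_{i-1})))$ are nonincreasing; then $\dim U = \sum \delta_i$ with $\delta_1 \le r$ and $\delta_i \le r$ for all $i$, and there are $\le r+1$ terms, giving the crude bound $r(r+1)$. To sharpen this to $r + \frac{(r+1)^2}{4}$, I would use that the $v_i$ constructed above already span an $s$-dimensional subspace of $U$ sitting inside $\sum P_i$ in a way that "uses up" overlap: more precisely, after choosing the $v_i$, each $P_i$ contains all $v_j$ with $j\neq i$, so $P_i$ contributes at most $\dim P_i - (s-1) \le r - (s-1)$ genuinely new directions beyond $\sspan(v_1,\ldots,v_s)$. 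Hence
\[
\dim U \le s + s\bigl(r-(s-1)\bigr) = s + sr - s^2 + s = s(r+2) - s^2 .
\]
Wait — this overcounts one $v_i$ per $P_i$; the correct bookkeeping is $\dim U \le \dim\sspan(v_1,\dots,v_s) + \sum_i \bigl(\dim P_i - (s-1)\bigr) = s + \sum_i(\dim P_i - s + 1)$, but $v_i \notin P_i$ so $P_i$ contains exactly the $s-1$ vectors $v_j$, $j\neq i$; this is not quite right either since those $s-1$ vectors need not be all of the overlap. Rather than belabor this, I record the clean estimate: $\dim U \le s + \sum_{i=1}^s (\dim P_i - (s-1))$ is the wrong sign when $s$ is small, so instead treat the function $\phi(s) = $ (bound) and maximize over $s \in \{1,\ldots,r+1\}$. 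Optimizing $s(r+2) - s^2$ over real $s$ gives maximum at $s = (r+2)/2$ with value $\frac{(r+2)^2}{4}$; a careful integer analysis, together with the separate first-term contribution, yields the stated $r + \frac{(r+1)^2}{4}$.

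\textbf{Main obstacle.} The crux is the second estimate: converting the bound $s \le r+1$ and $\dim P_i \le r$ into the quadratic bound $r + \frac{(r+1)^2}{4}$ rather than the naive $r(r+1)$. This requires a genuine inclusion–exclusion / dimension-count argument showing that the pairwise (or full-complement) intersections forced by irredundancy create enough overlap among the $P_i$ to save the factor. I expect this to hinge on analyzing the incidence structure of the $P_i$ inside $U$ — e.g. bounding $\dim U$ by $\binom{s}{2}$-type overlap terms subtracted from $\sum \dim P_i$, then optimizing the resulting expression in $s$ — and the extremal configuration (which should look like $s = \lceil (r+1)/2\rceil$ subspaces of dimension $r$ pairwise meeting in a line, arranged generically) is what pins down the exact constant.
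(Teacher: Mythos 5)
Your approach is a genuinely different route from the paper's, and the key idea is sound, though the write-up falters near the end.

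The paper's proof sets $a := \min_{i\neq j}\dim(P_i\cap P_j)$, uses the irredundancy to show that the chain $P_1\cap P_2 \supsetneq P_1\cap P_2\cap P_3 \supsetneq \cdots$ strictly decreases (hence $s\le a+2$), and then estimates $\dim\sum P_i \le \dim P_1 + \sum_{i>1}\dim P_i/(P_i\cap P_1) \le r + (s-1)(r-a) \le r + (a+1)(r-a) \le r + \tfrac{(r+1)^2}{4}$. Your proof instead exploits the complementary intersections: pick $0\neq v_i\in\bigcap_{j\neq i}P_j$, observe they are linearly independent (so $s-1\le r$ since $v_j\in P_k$ for all $j\neq k$), set $V_0=\sspan(v_1,\ldots,v_s)$, and note that each $P_i$ contains the $s-1$ independent vectors $v_j$ ($j\neq i$), whence $\dim(P_i+V_0)/V_0 \le r-(s-1)$ and $\dim\sum P_i\le s+s(r-s+1)=s(r-s+2)\le \tfrac{(r+2)^2}{4}$. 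This is a valid argument and in fact gives a slightly sharper bound for $r\ge 2$ (since $\tfrac{(r+2)^2}{4}\le r+\tfrac{(r+1)^2}{4}\iff 2r+3\le 4r\iff r\ge 3/2$), with $r=1$ checked directly because $s\le 2$.

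Where you go astray is in the unnecessary self-doubt about the bookkeeping and in leaving the final comparison unproved. The estimate $\dim\sum P_i \le s + \sum_i(\dim P_i - (s-1))$ is perfectly correct: it uses only the inequality $\dim(P_i\cap V_0)\ge s-1$, not any exact identification of $P_i\cap V_0$ with $\sspan(v_j: j\neq i)$, so there is no "overcounting" and no sign issue (for the valid range $2\le s\le r+1$ each summand is nonnegative). Your conclusion that "a careful integer analysis, together with the separate first-term contribution, yields the stated bound" is hand-waving in the wrong direction — there is no "separate first-term contribution" to worry about; the one-line inequality $\tfrac{(r+2)^2}{4}\le r+\tfrac{(r+1)^2}{4}$ for $r\ge 2$ (plus the trivial $r=1$ check) closes the gap immediately.
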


\begin{proof}
Let $a$ be the minimal dimension of intersections $P_i\cap P_j$. Then we claim that $s\le a+2$. Indeed, without loss of generality
we can assume that $\dim P_1\cap P_2=a$. Then for each $i\ge 2$ we should have 
$$\dim P_1\cap P_2\cap\ldots \cap P_i\le a+2-i,$$
due to irredundancy of the collection, which proves the claim for $i=s$.

On the other hand, since $\dim P_i/(P_i\cap P_1)\le r-a$ for $i>1$, we get that
$$N:=\dim(P_1+\ldots+P_s)\le r+(s-1)(r-a)\le r+(a+1)(r-a)\le r+\frac{(r+1)^2}{4}.$$
\end{proof}

\medskip

\begin{proof}[Proof of Theorem \ref{cubics-rough-bound-thm}]
We use induction on $r$. For $r=1$ the assertion is clear.
% if $|\PP_f|>1$ then $f$ is a pullback from a $3$-dimensional space, and in either case $\codim_{\P V} L_f\le 3<c(1)$. 
Assume $r>1$ and the assertion holds for $r-1$.
Let $\PP_f$ denote the set of $r$-dimensional subspaces $P\sub V^*$ such that $f|_{P^\perp}=0$, or equivalently, $f\sub (P)$.

If all $P\in \PP_f$ contain the same line $(v^*)$ then we can apply the induction assumption to the restriction of $f$ to the hyperplane $v^*=0$ in $V$,
which has slice rank $r-1$. Then the induction assumption implies that
$$\codim_{\P V} L_f\le c(r-1)+1\le c(r).$$

Otherwise, there exist $P_1,\ldots,P_s\in \PP_f$ such that $P_1\cap\ldots\cap P_s=0$.
Choosing a minimal such collection of subspaces and using Lemma \ref{subspaces-intersection-lem}, we get
$$N:=\dim(P_1+\ldots+P_s)\le r+\frac{(r+1)^2}{4}.$$
Now by Lemma \ref{ideals-intersection-lem}, $f$ belongs to $(W)\cdot (W)$, where $W=P_1+\ldots+P_s$.
Hence, $f$ can be written in the form
$$f=\sum_{1\le i\le j\le N}w_iw_jl_{ij},$$
for some linear forms $l_{ij}$, where $(w_i)$ is a basis of $W$. Hence, $f$
 is a pullback from a space of dimension $\le \frac{N(N+1)}{2}+N\le c(r)$. 
\end{proof}

\subsection{Cubics of slice rank $2$}

Here we study in more detail the case of cubics of slice rank $2$, proving in this case Conjecture B with $c(2,3)=6$ and partially classifying such cubics.

\begin{theorem}\label{rk2-intersection-thm} Let $f$ be a cubic of rank $2$. Then 
\begin{itemize}
\item either all $L\sub X$ with $\codim_{\P V}=2$ are contained in a fixed hyperplane, or 
\item $f$ is a pullback from a $6$-dimensional space, or 
\item $f$ can be written in the form $f=x_1y_1z_1+x_1y_2z_2+x_2y_1z_3$, where $x_1,x_2,y_1,y_2,z_1,z_2,z_3$ are linearly independent, or 
\item $f$ is a pullback from an $8$-dimensional space and $\codim_{\P V} L_f\le 4$, or 
\item $f$ is a pullback from a $9$-dimensional space and $\codim_{\P V} L_f\le 3$.
\end{itemize}
In either case $\codim_{\P V} L_f\le 6$.
\end{theorem}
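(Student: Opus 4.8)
The plan is to run an analysis parallel to the proof of Theorem \ref{cubics-rough-bound-thm}, but pushed much further because $r=2$ is small enough to make the combinatorics of the subspaces $P\in\PP_f$ completely explicit. Let $\PP_f$ be the set of $2$-dimensional subspaces $P\sub V^*$ with $f\in(P)$. If all $P\in\PP_f$ share a common line $(v^*)$, then every $L\sub X$ of codimension $2$ lies in the hyperplane $v^*=0$, which is the first alternative; so assume there is no common line. Then one can choose a minimal subcollection $P_1,\ldots,P_s\in\PP_f$ with $P_1\cap\cdots\cap P_s=0$. By Lemma \ref{subspaces-intersection-lem} with $r=2$, $\dim W\le 2+\frac{9}{4}$, i.e. $\dim W\le 4$, where $W=P_1+\cdots+P_s$; and inspecting the proof, the minimal number $a=\min\dim(P_i\cap P_j)$ is either $0$ or $1$, with $s\le a+2$. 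So the only possibilities are: $s=2$ with $\dim(P_1\cap P_2)=0$ (so $\dim W=4$, $a=0$); $s=2$ with $\dim(P_1\cap P_2)=1$ — impossible since then $P_1\cap P_2\neq 0$ contradicts $P_1\cap\cdots\cap P_s=0$ for $s=2$; or $s=3$ with $a=1$, forcing $\dim(P_i\cap P_j)=1$ for all pairs but $P_1\cap P_2\cap P_3=0$, and then $\dim W\le 2+(2)(1)=4$. So in all cases $\dim W\le 4$.

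Next I would treat the two surviving configurations. In the case $\dim W=4$ with two $2$-planes meeting in $0$: by Lemma \ref{ideals-intersection-lem}, $f\in(W)\cdot(W)=(W)^2$, so $f=\sum_{1\le i\le j\le 4}w_iw_jl_{ij}$ is a pullback from a space of dimension $\le \binom{4}{2}+4-?$; more precisely $f$ lives in the span of $W$ together with the $l_{ij}$, which has dimension $\le 4+\binom{5}{2}=14$ a priori — too large, so here one must use that $f\in(P_1)\cap(P_2)$ more carefully. The right move is: write $W=P_1\oplus P_2$ with $P_1=\langle x_1,x_2\rangle$, $P_2=\langle y_1,y_2\rangle$; since $f\in(x_1,x_2)$ and $f\in(y_1,y_2)$, a monomial-type argument (as in Proposition \ref{Grk-srk-prop2}) shows $f$ is a combination of monomials each divisible by some $x_i$ and by some $y_j$, hence $f\in(x_1,x_2)(y_1,y_2)$, so $f=\sum_{i,j}x_iy_j z_{ij}$ with four linear forms $z_{ij}$; thus $f$ is a pullback from a space of dimension $\le 4+4=8$, and one then has to further analyze when the $z_{ij}$ span $\le 1$ or $2$ dimensions modulo $W$ to land in the "pullback from $6$" or "pullback from $8$, $\codim L_f\le 4$" cases, and when a degeneration forces the explicit normal form $f=x_1y_1z_1+x_1y_2z_2+x_2y_1z_3$. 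For the case $s=3$, $\dim W\le 4$, the three $2$-planes pairwise meet in lines but have trivial total intersection; a short linear-algebra classification of such triples (they look like $\langle a,b\rangle,\langle b,c\rangle,\langle c,a\rangle$ up to choices, or a degenerate variant) combined with $f\in(P_1)\cap(P_2)\cap(P_3)\sub(W)^2$ by Lemma \ref{ideals-intersection-lem} again gives $f=\sum w_iw_j l_{ij}$, and then bounding the number of independent $l_{ij}$ modulo $W$ yields either a pullback from dimension $\le 6$, or the $9$-dimensional case $\codim L_f\le 3$, or again the explicit trilinear normal form.

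The remaining point is to verify $\codim_{\P V}L_f\le 6$ in every branch. In the "common hyperplane" branch this is not automatic from the branch itself; there one iterates, restricting $f$ to $v^*=0$ to get a cubic of rank $\le 2$ in one fewer variable, and applies the theorem inductively — the codimension goes up by at most $1$, and the base of the induction (the pullback cases and the explicit normal forms) all satisfy $\codim L_f\le 5$, so the $+1$ keeps us at $\le 6$; I would need to check the induction closes, i.e. that restricting never increases $\codim L_f$ beyond control, which is where the sharp constant matters. In the pullback branches, $\codim L_f$ is computed inside the small space $S(W')$ (with $W'$ of dimension $6,8,9$) using Lemma \ref{flat-lem}, and for the explicit normal form $f=x_1y_1z_1+x_1y_2z_2+x_2y_1z_3$ one checks directly (as with the example $f=\sum x_ix_jy_{ij}$ in the introduction) that the rank-$2$ subspaces $L\sub X$ are $\{z_1=x_2=0\}$, $\{y_2=x_2=0\}$, $\{y_1=x_1=0\}$, $\{z_2=x_1=0\}$, $\{z_3=y_1=0\}$, and that their intersection has codimension exactly $6$ — i.e. the seven coordinates $x_1,x_2,y_1,y_2,z_1,z_2,z_3$ cannot all be forced, but six of the cutting equations are independent. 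The main obstacle I expect is precisely the case-by-case classification in the $\dim W=4$, $s=2$ and $s=3$ configurations: ruling out spurious sub-cases and identifying exactly when the normal form $f=x_1y_1z_1+x_1y_2z_2+x_2y_1z_3$ occurs, versus a genuine pullback from dimension $6$, $8$, or $9$, requires a careful but elementary dissection of how the linear forms $l_{ij}$ (equivalently $z_{ij}$) sit relative to $W$.
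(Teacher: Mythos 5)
Your overall architecture is sound and in fact parallels the paper's: the paper also splits on whether $\PP_f$ (the set of $2$-dimensional $P\sub V^*$ with $f\in(P)$) has a common line, then (when not) on whether some pair of $P$'s has trivial intersection versus all pairs meeting. Your observation that when $P_1\cap P_2=0$ one actually has $f\in (P_1)\cdot(P_2)$, not merely $f\in(W)^2$, is correct and is exactly the starting point for the hardest lemma in the paper's argument. Your reduction of the $s=3$ case (all pairs meeting nontrivially, triple intersection zero) to $\dim W=3$ is also right; the paper gets there via the projective-geometry lemma about mutually meeting lines, but your direct case check is equivalent.

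However there are two genuine problems. First, the case $P_1\cap P_2=0$ is where essentially all the work of the theorem lives, and you have not actually done it. Writing $f=\sum x_iy_j z_{ij}$ only gives you a pullback from $8$ variables; extracting the trichotomy (pullback from $6$; all of $\PP_f$ inside the $4$-dimensional $\sspan(x_1,x_2,y_1,y_2)$; the explicit normal form $x_1y_1z_1+x_1y_2z_2+x_2y_1z_3$) requires a careful analysis of how an arbitrary $P\in\PP_f$ can sit relative to $\sspan(x_1,x_2,y_1,y_2)$, splitting on $P\cap\sspan(x_1,x_2)$ and $P\cap\sspan(y_1,y_2)$ and using that $(P_i)$ are prime ideals. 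You correctly identify this as the main obstacle but do not supply the argument, so the proof is incomplete.

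Second, your verification for the explicit normal form is wrong. For $f=x_1y_1z_1+x_1y_2z_2+x_2y_1z_3$ you list the codimension-$2$ linear subspaces in $X$ as $\{z_1=x_2=0\}$, $\{y_2=x_2=0\}$, $\{y_1=x_1=0\}$, $\{z_2=x_1=0\}$, $\{z_3=y_1=0\}$. Only the third of these actually lies in $X$: for instance $f|_{z_1=x_2=0}=x_1y_2z_2\neq 0$, so $f\notin(z_1,x_2)$. The correct list is $(x_1,x_2)$, $(y_1,y_2)$, $(x_1,z_3)$, $(y_1,z_2)$ (check: e.g.\ $f=x_1(y_1z_1+y_2z_2)+z_3\cdot x_2y_1\in(x_1,z_3)$), which together span the $6$-dimensional space $\sspan(x_1,x_2,y_1,y_2,z_2,z_3)$, giving $\codim L_f=6$. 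Your list spans a $7$-dimensional space, and your remark that ``six of the cutting equations are independent'' does not rescue this: if all five of your planes were in $\PP_f$ then $\codim L_f$ would be $7$, contradicting the statement you are trying to prove. A small further point: in the common-line branch, after restricting to $v^*=0$ the cubic has rank exactly $1$, not $2$, so you should not try to apply the rank-$2$ theorem inductively; instead apply Lemma~\ref{rk1-lem} directly, which gives $\codim L_f\le 4$ at once with no induction and no need for a delicate ``$+1$'' accounting.
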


From now on we fix a cubic $f\in \k[V]$ of slice rank $2$. 
As in the proof of Theorem \ref{cubics-rough-bound-thm} we denote by
$\PP_f$ the set of $2$-dimensional subspaces $P\sub V^*$ such that $f|_{P^\perp}=0$, or equivalently, $f\sub (P)$,
where $(P)\sub \k[V]$ denotes the ideal generated by $P$.

The following result is well known but we include the (simple) proof for reader's convenience.

\begin{lemma}\label{proj-lines-lem} 
Let $\SS$ be a set of $2$-dimensional subspaces in $V^*$ such that for any $P_1,P_2\in \SS$ we have $P_1\cap P_2\neq 0$.
Then either there exists a line $L\sub V^*$ such that $L\sub P$ for all $P\in \SS$, or there exists a $3$-dimensional subspace $W\sub V^*$ such that
$P\sub W$ for all $P\in \SS$.
\end{lemma}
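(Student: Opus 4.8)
The plan is to reduce immediately to the case where $\SS$ contains two distinct subspaces $P_1\neq P_2$ (if $\SS$ has at most one element the statement is trivial). Since $P_1$ and $P_2$ are distinct $2$-dimensional subspaces with $P_1\cap P_2\neq 0$, the intersection $L_0:=P_1\cap P_2$ is exactly a line and $W_0:=P_1+P_2$ is $3$-dimensional. I will prove the sharp dichotomy: \emph{either} every $P\in\SS$ contains $L_0$, \emph{or} every $P\in\SS$ is contained in $W_0$; these are precisely the two alternatives in the statement.

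First I would record the elementary observation used repeatedly: if $P\in\SS$ with $P\notin\{P_1,P_2\}$, then $P\cap P_1$ and $P\cap P_2$ are each exactly $1$-dimensional (nonzero by hypothesis, and proper in $P$ since $P$ differs from $P_1$ and from $P_2$). Moreover, if in addition $L_0\not\sub P$, then $P\cap P_1\neq P\cap P_2$: otherwise this common line would lie in $P_1\cap P_2=L_0$, hence equal $L_0$, and also lie in $P$, contradicting $L_0\not\sub P$. Since $P\cap P_1$ and $P\cap P_2$ are then two distinct lines in the $2$-plane $P$, they span it, and as both lie in $W_0$ we conclude $P\sub W_0$.

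Now for the dichotomy: if every $P\in\SS$ contains $L_0$ we are in the first case. Otherwise fix $P_3\in\SS$ with $L_0\not\sub P_3$ (note $P_3\notin\{P_1,P_2\}$ since it does not contain $L_0$), so by the observation $P_3\sub W_0$. I then claim every $P\in\SS$ lies in $W_0$. This is clear for $P\in\{P_1,P_2\}$. If $P\notin\{P_1,P_2\}$ and $L_0\not\sub P$, the observation gives $P\sub W_0$. Finally, if $L_0\sub P$ (so $P\neq P_3$), consider $P\cap P_3$: it is a nonzero subspace of $P_3\sub W_0$, it does not contain $L_0$ (as $L_0\not\sub P_3$), so $P\cap P_3\neq L_0$, and therefore $P=L_0+(P\cap P_3)\sub W_0$. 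Thus all members of $\SS$ are contained in $W_0$, proving the second alternative.

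The argument involves no real obstacle — it is the classical fact that a family of pairwise-intersecting lines in projective space is either concurrent or coplanar — so the only thing to be careful about is the bookkeeping of degenerate configurations (a member of $\SS$ coinciding with $P_1$ or $P_2$, and verifying that the relevant intersections have dimension exactly $1$ rather than $2$), which the case analysis above is arranged to handle cleanly.
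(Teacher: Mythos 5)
Your proof is correct and it is essentially the same argument as the paper's, just rendered in linear algebra rather than in the projective language of concurrent lines and a triangle: your $L_0$ is their point $p=\ell_1\cap\ell_2$, your $W_0$ is their plane, your $P_3$ is their line $\ell_3$ missing $p$, and your two-case analysis (on whether $L_0\subset P$) matches their "pick a vertex of the triangle that $\ell$ misses."
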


\begin{proof} We can think of $\SS$ as a family of projective lines in the projective space such that any two intersect. Our statement is that either they all pass through
one point, or they are contained in a plane. Indeed, assume they do not all pass through one point. Pick a pair of lines $\ell_1,\ell_2$ intersecting at a point $p$.
There exists a line $\ell_3$, not passing through $p$. Then $\ell_1,\ell_2,\ell_3$ form a triangle in a plane. Now given any other line $\ell$ from $\SS$, we can pick a vertex
of the triangle such that $\ell$ does not pass through it. Say, assume $\ell$ does not pass through $p$. Then $\ell\cap \ell_1$ and $\ell\cap \ell_2$ are two distinct points of $\ell$,
so $\ell$ is contained in the plane of the triangle.
\end{proof}

\begin{lemma}\label{nontriv-int-case} 
Assume that for any pair $P_1,P_2\in \PP_f$ we have $P_1\cap P_2\neq 0$. Then either there exists a nonzero linear form $v^*\in V^*$, such that
$v^*\in P$ for all $P\in \PP_f$, in which case $\codim_{\P V}L_f\le 4$, or $f$ is a pullback from a $9$-dimensional space and $\codim_{\P V}L_f\le 3$.
\end{lemma}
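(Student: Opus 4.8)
The plan is to apply Lemma~\ref{proj-lines-lem} to the set $\SS=\PP_f$, which by assumption consists of pairwise-intersecting $2$-dimensional subspaces of $V^*$. This gives two cases. In the first case, there is a $3$-dimensional subspace $W\sub V^*$ with $P\sub W$ for all $P\in\PP_f$; in the second, there is a line $(v^*)\sub V^*$ with $v^*\in P$ for all $P\in\PP_f$. I would organize the argument around these two alternatives and show that the first yields the ``pullback from a $9$-dimensional space'' conclusion while the second yields the ``$\codim_{\P V}L_f\le 4$'' conclusion. Note the statement of the lemma lists them in the opposite order, so some care is needed in the bookkeeping.

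First I would treat the case where all $P\in\PP_f$ lie in a fixed $3$-dimensional $W$. Here $L_f=\cap_{P\in\PP_f}P^\perp\supset W^\perp$, so $\codim_{\P V}L_f\le \dim W=3$, giving the bound $3$ directly. For the ``pullback'' claim: pick any $P_0\in\PP_f$, so $f\in(P_0)\sub(W)$. I want to sharpen this to show $f$ is a pullback from a $9$-dimensional space. Since $f\in(W)$ we can write $f=w_1g_1+w_2g_2+w_3g_3$ for a basis $w_1,w_2,w_3$ of $W$ and quadrics $g_i\in\k[V]$. The quadrics $g_i$ are only determined modulo relations among $w_1,w_2,w_3$, but we can at least say $f$ is a pullback from the span of $W$ together with the (at most $9$-dimensional, but a priori larger) linear span of the linear forms appearing in $g_1,g_2,g_3$. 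To get down to dimension $9=3+\binom{3}{2}\cdot?$... actually the natural count is: after choosing coordinates so that $f=w_1g_1+w_2g_2+w_3g_3$, one can reduce each $g_i$ modulo $w_1,w_2,w_3$ where convenient, and the claim $\dim\le 9$ should come from $f\in(W)^2+(\text{something})$. I would instead argue as in the proof of Theorem~\ref{cubics-rough-bound-thm}: if $f\in(W)\cdot(W)$ then $f=\sum_{i\le j}w_iw_jl_{ij}$ is a pullback from a space of dimension $\le \binom{3+1}{2}+3=6+3=9$, matching the stated bound. So the real content here is to show $f\in(W)^2$, i.e. that every monomial of $f$ is divisible by $W^2$, not merely by $W$; this follows from $f\in(P_0)$ for a specific $P_0\sub W$ together with the fact that the collection $\PP_f$ contains enough subspaces — concretely, using Lemma~\ref{ideals-intersection-lem} applied to $P_1,\ldots,P_s\in\PP_f$ with $P_1\cap\cdots\cap P_s$ of dimension as small as possible inside $W$.

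Second I would treat the case where a common line $(v^*)$ is contained in every $P\in\PP_f$. Write $v^*=x_1$. Then for each $P=\lan x_1,\ell_P\ran\in\PP_f$ we have $f\in(x_1,\ell_P)$, so $f\equiv \ell_P\cdot(\text{quadric}) \pmod{x_1}$, i.e. the image $\bar f$ of $f$ in $\k[V]/(x_1)$ is divisible by $\ell_P$ for every $P$. Thus $\bar f$ is divisible by the product of the distinct $\ell_P$'s; since $\bar f$ has degree $3$, at most $3$ distinct lines $\ell_P$ can occur, and the span of $\{\ell_P\}$ has dimension $\le 3$. Hence all $P\in\PP_f$ are contained in the span of $x_1$ and these $\le 3$ forms, a space of dimension $\le 4$, so $L_f\supset$ the perp of this space, giving $\codim_{\P V}L_f\le 4$. (If $\bar f=0$ then $f$ has slice rank $1$, contradicting rank $2$, so this degenerate subcase doesn't arise; if fewer than $3$ lines occur one gets an even better bound.)

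The main obstacle I expect is the ``pullback from a $9$-dimensional space'' half of the $3$-dimensional-$W$ case: getting the clean bound $9$ rather than something larger requires upgrading $f\in(W)$ to $f\in(W)^2$, and this is exactly where one must use that $\PP_f$ is rich enough (more than one subspace, with trivial total intersection inside $W$ after projecting out any common line — but we are precisely in the subcase where there is \emph{no} common line, since that is the other alternative of Lemma~\ref{proj-lines-lem} unless $\PP_f$ is small). So one has to handle carefully the boundary situation where $\PP_f$ simultaneously has all members in a plane $W$ \emph{and} has a common line; there the weaker of the two conclusions applies and must be checked to still give $\codim L_f\le 6$, which it does since $4\le 6$ and $3\le 6$. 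Once the case division is set up correctly, each branch is a short computation of the type already appearing in Lemmas~\ref{ideals-intersection-lem} and~\ref{subspaces-intersection-lem}.
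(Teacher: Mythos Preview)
Your approach is correct and matches the paper's: apply Lemma~\ref{proj-lines-lem}, then in the common-line case reduce modulo $v^*$ to a rank-$1$ cubic (your direct factoring argument is exactly the content of Lemma~\ref{rk1-lem}), and in the $W$ case use Lemma~\ref{ideals-intersection-lem} to upgrade $f\in(W)$ to $f\in(W)^2$, giving the pullback from $\le 9$ variables. The paper sidesteps your ``boundary situation'' worry by organizing the dichotomy as \emph{common line} versus \emph{no common line}: in the latter case the $P$'s all lie in a $3$-dimensional $W$ by Lemma~\ref{proj-lines-lem} and automatically have $\bigcap P=0$, so Lemma~\ref{ideals-intersection-lem} applies with no further case analysis.
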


\begin{proof} By Lemma \ref{proj-lines-lem}, either all planes in $\PP_f$ span at most $3$-dimensional subspace $W\sub V^*$, or
there exists a nonzero linear form $v^*\in V^*$ such that $v^*\in P$ for all $P\in \PP_f$. In the latter case
let us consider the restriction $\wt{f}$ of our cubic
to the hyperplane $H_{v^*}\sub V$. Then $\wt{f}$ has rank $1$ and $\PP_f$ can be identified with $\PP_{\wt{f}}$. So by Lemma \ref{rk1-lem},
$L_f$ has codimension $3$ in $H_{v^*}$,
hence, it has codimension $4$ in $V$.

Now let us consider the case when all planes in $\PP_f$ are contained in a $3$-dimensional subspace $W$, and have zero intersection.
Then by Lemma \ref{ideals-intersection-lem}, $f\in (W)^2$. Hence, 
as in the proof of Theorem \ref{cubics-rough-bound-thm}, we deduce that $f$ depends on $\le 9$ variables. 
\end{proof}

\begin{lemma}\label{main-rk2-cubics-lem} 
Assume there exists linearly independent linear forms $x_1,x_2,y_1,y_2\in V^*$ such that $\sspan(x_1,x_2)\in \PP_f$ and $\sspan(y_1,y_2)\in \PP_f$.
Then $f$ is a pullback from an $8$-dimensional space, and one of the following possibilities hold:
\begin{enumerate}
\item $f$ is a pullback from a $6$-dimensional space;
\item for all $P\in \PP_f$ one has $P\sub \sspan(x_1,x_2,y_1,y_2)$;
\item $f$ can be written in the form $f=x_1y_1z_1+x_1y_2z_2+x_2y_1z_3$, where $x_1,x_2,y_1,y_2,z_1,z_2,z_3$ are linearly independent.
%\item $f$ can be written in the form $f=x_1y_1z_1+x_1y_2z_2+x_2(y_1+y_2)z_3+x_2y_1y_2$, where $x_1,x_2,y_1,y_2,z_1,z_2,z_3$ are linearly independent.
\end{enumerate}
\end{lemma}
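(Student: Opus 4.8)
The plan is to set up coordinates adapted to the two given planes, write $f$ as a sum of monomials in $x_1,x_2,y_1,y_2$ and the remaining variables, and use the constraints $f\in(x_1,x_2)$ and $f\in(y_1,y_2)$ to force a very restrictive shape. Since $f\in(x_1,x_2)$ we may write $f=x_1a+x_2b$ with $a,b$ quadratic; since also $f\in(y_1,y_2)$, modulo $(x_1,x_2)$ the polynomial $f$ must vanish, so $a,b$ can be chosen (after subtracting an element of $(x_1,x_2)^2$) to lie in $(y_1,y_2)$. Thus $f\equiv x_1(y_1p_1+y_2p_2)+x_2(y_1q_1+y_2q_2)\pmod{(x_1,x_2)^2}$ for linear forms $p_i,q_i$, and the remaining part in $(x_1,x_2)^2$ is of the form $x_1^2\ell_1+x_1x_2\ell_2+x_2^2\ell_3$. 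First I would show this $(x_1,x_2)^2$-part can be absorbed: writing $x_1^2\ell_1+x_1x_2\ell_2+x_2^2\ell_3 = x_1(x_1\ell_1+x_2\ell_2)+x_2(x_2\ell_3)$ and noting this must also lie in $(y_1,y_2)$ forces $x_1\ell_1+x_2\ell_2$ and $x_2\ell_3$ to be expressible through $y_1,y_2$ up to terms we can fold into redefinitions of $x_1,x_2,y_1,y_2$; more carefully, one argues that $f$ is a pullback from $\sspan(x_1,x_2,y_1,y_2,p_1,p_2,q_1,q_2)$, which has dimension $\le 8$. This is the first bullet of the conclusion (the $8$-dimensional pullback claim).

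Next I would analyze the $2\times 2$ "matrix" of linear forms $M=\begin{pmatrix}p_1&p_2\\ q_1&q_2\end{pmatrix}$, so that $f\equiv (x_1\ x_2)M\binom{y_1}{y_2}$ modulo lower-complexity terms. The dimension of $\sspan(p_1,p_2,q_1,q_2)$ modulo $\sspan(x_1,x_2,y_1,y_2)$ is the key invariant, and I would split into cases according to whether this dimension is $0$, $1$, $2$, or $3$ (it cannot be larger after the reductions). If it is $0$, then $f$ is a pullback from the $4$-dimensional space $\sspan(x_1,x_2,y_1,y_2)$, giving case (1) with room to spare, or more precisely we land in case (2) ($P\subset\sspan(x_1,x_2,y_1,y_2)$ for all $P\in\PP_f$). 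If it is $1$, then all of $p_i,q_i$ are (modulo $x,y$) proportional to a single new form $z$, so $f$ becomes, up to change of basis in the $x$'s and $y$'s, something like $x_1y_1z$ plus terms in $\sspan(x_1,x_2,y_1,y_2)$; a further normalization should land us in a $\le 6$-dimensional pullback, i.e. case (1). The genuinely new behavior appears when the rank-type invariant is $2$ or $3$: here I expect to be able to perform row and column operations on $M$ (which correspond to changing bases within $\sspan(x_1,x_2)$ and within $\sspan(y_1,y_2)$, both of which preserve membership in $\PP_f$) to bring $M$ into a normal form, and the surviving non-degenerate normal form is exactly $M=\begin{pmatrix}z_1&z_2\\ z_3&0\end{pmatrix}$ (or something equivalent to it after absorbing diagonal terms), yielding $f=x_1y_1z_1+x_1y_2z_2+x_2y_1z_3$ with all seven forms independent — case (3).

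The main obstacle I anticipate is the bookkeeping in the "absorption" steps: showing that the $(x_1,x_2)^2$-part and various diagonal contributions $x_i y_i z$ really can be removed by redefining $x_1,x_2,y_1,y_2$ without destroying either of the conditions $\sspan(x_1,x_2)\in\PP_f$, $\sspan(y_1,y_2)\in\PP_f$, and without introducing new variables beyond the claimed bounds. In particular one must be careful that substitutions like $x_1\mapsto x_1+(\text{something})$ keep the collection of linear forms independent and keep $f$ inside the stated small subspace; the classification of the $2\times 2$ matrix $M$ of linear forms up to the allowed row/column operations plus addition of "pure" terms $x_iy_j(\text{scalar})$ is where all the case-work lives. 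A secondary subtlety is verifying that in cases (1) and (2) one does not accidentally also satisfy the hypotheses of case (3) in a way that conflicts — but since the statement is a disjunction ("one of the following"), it suffices to show at least one holds, so I would simply exhaust the cases above and check each lands in (1), (2), or (3).
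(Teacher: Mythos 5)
Your first step, deriving the $8$-dimensional pullback from $f\in(x_1,x_2)\cap(y_1,y_2)$, is correct and essentially the paper's: since $x_1,x_2,y_1,y_2$ are independent, the intersection of ideals $(x_1,x_2)\cap(y_1,y_2)$ equals $(x_1y_1,x_1y_2,x_2y_1,x_2y_2)$, so $f=\sum_{i,j}x_iy_jl_{ij}$ for linear forms $l_{ij}$ and $f$ lies in $S(\sspan(x_1,x_2,y_1,y_2,l_{11},l_{12},l_{21},l_{22}))$. (Your detour through the $(x_1,x_2)^2$-part can be avoided by quoting this description of the intersection, which is what the paper does implicitly.)

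The second part, however, has a genuine gap, and it is structural rather than a matter of bookkeeping. The alternative (2) in the lemma is a \emph{universal} statement about $\PP_f$: every $P\in\PP_f$ is contained in $\sspan(x_1,x_2,y_1,y_2)$. To prove this one must take an \emph{arbitrary} $P\in\PP_f$ and analyze it; the paper does exactly this, splitting on how $P$ meets $\sspan(x_1,x_2)$ and $\sspan(y_1,y_2)$, and showing that if $P\not\sub\sspan(x_1,x_2,y_1,y_2)$ then $f$ is forced into the $6$-dimensional pullback case or the normal form (3). Your proposal instead classifies $f$ by the invariant $m:=\dim\bigl(\sspan(l_{ij})\bmod\sspan(x_1,x_2,y_1,y_2)\bigr)$ and never examines an arbitrary $P$, so it has no way to \emph{establish} alternative (2) when it is the one that holds. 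Concretely: the generic situation is $m=4$ (all eight forms independent), which contradicts your claim that $m\le 3$ after reductions (the $\GL_2\times\GL_2$ row/column operations are scalar and do not lower $m$, nor does the nonuniqueness of $(l_{ij})$, which only alters them by elements of $\sspan(x_1,x_2,y_1,y_2)$). In that generic case $f$ is neither a $6$-dimensional pullback nor of form (3), so the lemma's conclusion must be (2) --- and your argument gives nothing here.

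There is also a secondary error in the $m=3$ case: a $2\times 2$ matrix of linear forms with a $3$-dimensional span cannot in general be brought to the shape $\bigl(\begin{smallmatrix}z_1&z_2\\z_3&0\end{smallmatrix}\bigr)$ by $\GL_2\times\GL_2$. Producing a zero entry requires realizing the unique linear dependence among $l_{11},l_{12},l_{21},l_{22}$ (mod $\sspan(x,y)$) by a rank-one tensor of the form $(A_{\bullet})\ot(B_{\bullet})$ with $A,B\in\GL_2$, which imposes a rank condition on the coefficient matrix of that dependence and fails for generic coefficients. So even within the case split you set up, the normal form at $m=3$ is unjustified, and again (2) is the missing outcome. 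The correct strategy --- used in the paper --- is to assume some $P\in\PP_f$ leaves $\sspan(x_1,x_2,y_1,y_2)$ and then exploit $f\in(P)$ together with $f=\sum x_iy_jl_{ij}$ to force each $l_{ij}$ into a small space, yielding cases (1) or (3); if no such $P$ exists, one is in case (2) by definition.
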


\begin{proof}
Note that we can write
$$f=x_1y_1l_{11}+x_1y_2l_{12}+x_2y_1l_{21}+x_2y_2l_{22},$$
for some linear forms $l_{ij}\in V^*$. This immediately implies that $f$ depends on $\le 8$ variables.

Let $P=\sspan(l_1,l_2)$ be in $\PP_f$. First, we claim that if $P\cap \sspan(x_1,x_2)=0$ and $P\cap \sspan(y_1,y_2)=0$ then either $P\sub \sspan(x_1,x_2,y_1,y_2)$
or $f$ is a pullback from a $6$-dimensional space.
Indeed, assume that $P$ is not contained in $\sspan(x_1,x_2,y_1,y_2)$. First, we observe that 
for generic $x\in \sspan(x_1,x_2)$ and generic $y\in \sspan(y_1,y_2)$ we should have
$P\cap \sspan(x,y_1,y_2)=0$ and $P\cap \sspan(y,x_1,x_2)=0$.  Indeed, otherwise we could pick generic $x,x'\in \sspan(x_1,x_2)$ such that there exist nonzero vectors
$v\in P\cap \sspan(x,y_1,y_2)$ and $v'\in P\cap \sspan(x',y_1,y_2)$. But then, since $P\cap \sspan(y_1,y_2)=0$, we would have that $v$ and $v'$ are linearly independent,
and so $P=\sspan(v,v')\sub (x_1,x_2,y_1,y_2)$.
Hence, changing bases $\sspan(x_1,x_2)$ and $\sspan(y_1,y_2)$ if necessary, we can assume that
$$P\cap \sspan(x_1,y_1,y_2)=P\cap \sspan(x_2,y_1,y_2)=P\cap \sspan (y_1,x_1,x_2)=P\cap \sspan(y_2,x_1,x_2)=0.$$
Now the fact that $f\in (P)$ implies that
$$x_1(y_1l_{11}+y_2l_{12})\in (x_2,P).$$
Since $x_1\not\in (x_2,P)$, we get that $y_1l_{11}+y_2l_{12}\in (x_2,P)$. Hence,
$$y_1l_{11}\in (x_2,y_2,P).$$
We know that $y_1\not\in (x_2,y_2,P)$ since otherwise we would get a nonzero intersection $P\cap (x_2,y_1,y_2)$. Hence $l_{11}\in (x_2,y_2,P)$.
Similarly, we get $l_{12}\in (x_2,y_1,P)$, $l_{21}\in (x_1,y_2,P)$, and $l_{22}\in (x_1,y_1,P)$. But this implies that $f$ is a pull-back from a $6$-dimensional space.

It remains to consider the case when there exists $P$ in $\PP_f$, such that $P\cap \sspan(y_1,y_2)=0$ and $P\cap \sspan(x_1,x_2)=\sspan(x_1)$.
Then the condition $f\in (P)$ gives
$$x_2(y_1l_{21}+y_2l_{22})\in (P).$$
Hence, $y_1l_{21}+y_2l_{22}\in (P)$, which implies that
$$y_1l_{21}\in (y_2,P).$$
Since $y_1\not\in (y_2,P)$, we get $l_{21}\in (y_2,P)$. Similarly, we get $l_{22}\in (y_1,P)$.
Let $P=\sspan(x_2,l)$, where $l\in V^*$. Then we can write 
$$l_{21}=a_1x_1+b_1y_2+c_1l, \ \ l_{22}=a_2x_1+b_2y_1+c_2l,$$
so we can rewrite $f$ in the form
$$f=x_1y_1(l_{11}+a_1x_2)+x_1y_2(l_{12}+a_2x_2)+x_2(c_1y_1+c_2y_2)l+(b_1+b_2)x_2y_1y_2.$$
The condition $f\in (x_1,l)$ gives $(b_1+b_2)x_2y_1y_2\in (x_1,l)$, which is possible only if $b_1+b_2=0$.
This easily implies that either $f$ is a pullback from a $6$-dimensional space, or can be written in the form (3).
\end{proof}

\medskip
\begin{proof}[Proof of Theorem \ref{rk2-intersection-thm}]
Taking into account Lemmas \ref{nontriv-int-case} and \ref{main-rk2-cubics-lem}, it remains to prove that in the situation of Lemma \ref{main-rk2-cubics-lem} one has $\codim L_f\le 6$. This
is clear in cases (1) and (2). In case (3), it is easy to check that $\PP_f$ consists of $4$ elements:
$$(x_1,x_2), (y_1,y_2), (x_1,z_3), (y_1,z_2).$$
The corresponding intersection has codimension $6$.
\end{proof}

\section{Hypersurfaces of higher degree}

\subsection{Proof of Theorem C(ii)}

We use induction on $d\ge 1$.
The case $d=1$ is clear, so assume that $d\ge 2$ and the assertion holds for degrees $<d$.
%Our goal is to prove that
%$$\dim\sum_{P\in \PP_f} P \le d^2+1.$$
Assume that $\dim\sum_{P\in \PP_f} P >d^2-1$ (otherwise we are done), and let $\{P_1,\ldots,P_n\}$ be a minimal
subset of $\PP_f$ such that
$$\dim \sum_{i=1}^n P_i>d^2-1.$$
Note that by minimality, $\dim \sum_{i=1}^{n-1} P_i\le d^2-1$, so 
$$\dim \sum_{i=1}^n P_i\le d^2+1.$$

\medskip

\noindent
{\bf Claim}. There are no nonzero homogeneous polynomials of degree $d-1$ in the ideal $(P_1)\cap\ldots\cap (P_n)$.

Indeed, suppose $g\in (P_1)\cap\ldots\cap (P_n)$ is such a polynomial. 
%If $\srk(g)=2$ then by the induction assumption we have
%$$\dim \sum_{i=1}^n P_i\le (d-1)^2+1\le d^2-1,$$
%contradiction our assumption. Hence, $g$ is divisible by a linear form, so 
We have one of the two cases:

\noindent
{\bf Case 1}. $g=l_1\ldots l_k\cdot h$, where $\deg l_i=1$, $0\le k<d-2$, $\srk(h)\ge 2$.

\noindent
{\bf Case 2}. $g=l_1\ldots l_{d-1}$, where $\deg l_i=1$.

Let us consider Case 1 first.
Since each $(P_i)$ is a prime ideal, we should have a decomposition
$$\{1,\ldots,n\}=S_1\cup \ldots\cup S_k\cup S,$$
where $l_j\in P_i$ for all $i\in S_j$ and $h\in (P_i)$ for $i\in S$ (and $S=\emptyset$ if $\srk h>2$).

Let us fix $j$ such that $S_j\neq \emptyset$.
Then $f\mod (l_i)$ has slice rank $1$, hence $\dim\sum_{i\in S_j} P_i/(l_i)\le d$ (by Lemma \ref{rk1-lem}). In other words,
$$\dim \sum_{i\in S_j} P_i\le d+1.$$
On the other hand, assuming that $S\neq\emptyset$ and applying the induction hypothesis to $h$, we get
$$\dim \sum_{i\in S} P_i\le (d-1-k)^2+1.$$
Hence, we obtain
$$\dim \sum_{i=1}^n P_i\le k(d+1)+(d-1-k)^2+1\le d^2-1,$$
which is a contradiction.

Similarly, in Case 2 we get
$$\dim \sum_{i=1}^n P_i\le (d-1)(d+1)=d^2-1,$$
which is a contradiction. This proves the Claim.

Combining the Claim with Lemma \ref{ideals-intersection-lem}, we get the inclusion
$$f\in (P_1)\cap\ldots\cap (P_n)\sub (P_1+\ldots+P_n)^d.$$
Hence, $f$ is a pullback from a space of dimension $\le d^2+1$.
This finishes the proof. 

\subsection{Proof of Theorem D}
%Linear subspaces of higher degree hypersurfaces}

%The key idea is to use the following generalization of Lemma \ref{ideals-intersection-lem}.

%\medskip

%\begin{proof}[Proof of Theorem C]
Let us dualize the recursive procedure described in Definition \ref{recursion-subspaces-def}. For a collection $\PP=(P_1,\ldots,P_s)$ of subspaces of $V^*$
we set $P^{(1)}=\cap_{i=1}^s P_i$, and for each minimal subset $J\sub [1,s]$ such that $\cap_{j\in J} P_j=P^{(1)}$, we set $P_J:=\sum_{j\in J} P_j$.
We denote by $\PP^{(1)}$ the collection of all subspaces $P_J$ of $V^*$ obtained in this way. Iterating this procedure we get collections of
subspaces $\PP^{(i)}$ for $i\ge 0$, where $\PP^{(0)}=\PP$. Let us also set $P^{(0)}=0$ and for $i\ge 0$,
$$P^{(i+1)}:=\cap_{P\in \PP^{(i)}} P.$$
Note that $P^{(i)}\sub P^{(i+1)}$.

\medskip

\noindent
{\bf Step 1}. If $\dim P_i\le r$ for all $i$ then $\dim P_J\le r^2$.
Indeed, let $a=\dim P^{(1)}$. Then $\dim P_i/P^{(1)}\le r-a$ and applying Lemma \ref{subspaces-intersection-lem} we
see that for every minimal subset $J$ with $\cap_{j\in J}P_j=P^{(1)}$, one has
$$\dim P_J=a+\dim P_J/P^{(1)}\le a+(r-a)+\frac{(r-a+1)^2}{4}\le r+\frac{(r+1)^2}{4}.$$
Since
$$\lfloor r+\frac{(r+1)^2}{4} \rfloor\le r^2$$
for $r\ge 2$, the assertion follows.

\medskip

\noindent
{\bf Step 2}. Suppose $f$ is a homogeneous polynomials such that $f\in (P_i)$ for $i=1,\ldots,s$. Let us prove by induction on $i\ge 0$ that
$$f\in (P^{(i)})+(P)^{i+1}$$
for any $P\in \PP^{(i)}$. Indeed, for $i=0$ this is true by assumption. Assume that $i>0$ and the assertion holds for $i-1$. 
Let us apply Lemma \ref{ideals-intersection-lem} to 
a collection of subspaces $\{Q_1,\ldots,Q_p\}\sub \PP^{(i-1)}$ such that $Q_1\cap\ldots\cap Q_p=P^{(i)}$, or rather to the corresponding
subspaces $\ov{Q}_i=Q_i/P^{(i)}$ of $V^*/P^{(i)}$. We get the inclusion of ideals
$$(\ov{Q}_1)^i\cap\ldots\cap (\ov{Q}_p)^i\sub (\sum \ov{Q}_j)^{i+1}$$
in the symmetric algebra of $V^*/P^{(i)}$. Let us
consider the polynomial $\ov{f}=f \mod (P^{(i)})$ in this algebra.
By assumption, $\ov{f}\in (\ov{Q}_j)^i$ for $j=1,\ldots,p$. Hence, we deduce that
$\ov{f}\in (\sum \ov{Q}_j)^{(i+1)}$, i.e.,
$$f\in (P^{(i)})+(\sum Q_j)^{(i+1)}.$$
Since every subspace in $\PP^{(i)}$ has form $\sum Q_j$, with $(Q_1,\ldots,Q_p)$ as above, this proves the induction step.

\medskip

\noindent
{\bf Step 3}. For $i=d$, since $f$ is homogeneous of degree $d$, the result of the previous step gives
$$f\in (P^{(d)}).$$
Recall that $P^{(d)}$ is the intersection of all subspaces in $\PP^{(d-1)}$. Iterating the result of Step 1, we see that the dimension of
any subspace in $\PP^{(d-1)}$, and hence of $P^{(d)}$, is $\le r^{2^{d-1}}$.
This ends the proof of Theorem D.

\begin{remark}\label{family-rk-rem}
Suppose we have an $s$-dimensional subspace $F$ of homogeneous polynomials of the same degree $d$, defined over $\k$, such that there exists a nonzero
$f\in F_{\ov{\k}}$ and subspace of linear forms of dimension $r$ over $\ov{\k}$ such that $f\in (L)$. One can ask how to produce an element $f_0\in F\setminus 0$ and a subspace
of linear forms $L_0$, both defined over $\k$, such that $f_0\in (L_0)$ and dimension of $L_0$ is $\le c(sr)$ (by Theorem A, we know that such an element exists).

Let $F_0\sub F$ denote the subspace spanned by all the Galois conjugates of $f$. Then $F_0$ is defined over $\k$. As $f_0$ we will take any nonzero element of $F_0$.

Since $\dim F_0\le \dim F\le s$, we can choose a set of elements of the Galois group $\si_1,\ldots,\si_s$, such that $(\si_1 f,\ldots,\si_s f)$ span $F_0$.
Hence, $f_0$ is a linear combination of $(\si_1 f,\ldots,\si_s f)$, and so,
$$f_0\in (\si_1 L+\ldots+\si_s L).$$
Now applying our algorithm from Theorem D for $f_0$, we find a subspace $L_0$ of dimension $\le c(sr)$ defined over $\k$, with $f_0\in (L_0)$.
\end{remark}


\begin{thebibliography}{9}
\bibitem{AKZ} K.~Adiprasito, D.~Kazhdan, T.~Ziegler, {\it On the Schmidt and analytic ranks for trilinear forms},
arXiv:2102.03659.
\bibitem{BBOV} E.~Ballico, A.~Bik, A.~Oneto, E.~Ventura, {\it Strength and slice rank of forms are genericaly equal},
arXiv:2102.11549.
\bibitem{Derksen} H.~Derksen, {\it The $G$-stable rank for tensors}, arXiv:2002.08435.
%\bibitem{KP-lin-alg} D.~Kazhdan, A.~Polishchuk, {\it Almost invariant subspaces and operators}, to appear.
\bibitem{KP-quartics} D.~Kazhdan, A.~Polishchuk, {\it Schmidt rank of quartics over perfect fields}, arXiv:2110.10244.
\bibitem{Kempf} G.~Kempf, {\it Instability in invariant theory}, Ann. of Math. (2) 108 (1978), 299-316.
\end{thebibliography}
\end{document}